\newtheorem{proposition}{Proposition}
\newtheorem{theorem}[proposition]{Theorem}
\newtheorem{lemma}[proposition]{Lemma}
\theoremstyle{remark}
\newtheorem{remark}[proposition]{Remark}
\theoremstyle{definition}
\numberwithin{equation}{section}
\numberwithin{proposition}{section}
\numberwithin{table}{section}
\renewcommand{\leq}{\leqslant}
\renewcommand{\geq}{\geqslant}
\renewcommand{\epsilon}{\varepsilon}
\title[Non-coincidence of critical points for directed
polymers on percolation]{Non-coincidence of critical points for directed
polymers on supercritical percolation clusters}
\author[F.\ Cottini]{
Francesca Cottini}
\address{Laboratoire de Probabilit{\'e}, Statistique et Mod{\'e}lisation of the Sorbonne Universit{\'e}, Sorbonne Universit{\'e}}
\curraddr{4 place Jussieu, 75005 Paris, France}
\email{francesca.cottini@sorbonne-universite.fr}
\author[M.\ Nitzschner]{
Maximilian Nitzschner}
\address{Department of Mathematics, The Hong Kong University of Science and Technology}
\curraddr{Clear Water Bay, Kowloon, Hong Kong}
\email{mnitzschner@ust.hk}
\date{\today}
\begin{document}

\begin{abstract}

We consider the model of a directed polymer in a random environment defined on the infinite cluster of supercritical Bernoulli bond percolation in dimensions $d \geq 3$. For this model, it was proved in~\cite{nitzschner2025absence} that for almost every realization of the cluster, the polymer is in a \textit{strong disorder} regime for any positive inverse temperature. Here, we show for almost every realization of the cluster the existence of a non-empty sub-phase of the strong disorder regime, consisting of positive inverse temperatures in which \textit{very strong disorder} does not hold. This is in contrast to the recently established sharpness of the phase transition for the directed polymer on the full lattice, see~\cite{junk2024strong,junk2025coincidence}.

\bigskip

\noindent \textsc{MSC 2020:} 82D60; 82B44; 60K37; 82B43; 60K35

\medskip

\noindent \textsc{Keywords:} Directed polymers in random environment, percolation, (very) strong disorder

\end{abstract}
\maketitle

\tableofcontents

%
%
%
%
%
%
%
%
\section{Introduction}

We investigate the model of a directed polymer in a random, i.i.d.~environment, defined on a typical realization of the infinite cluster of Bernoulli (bond) percolation on $\mathbb{Z}^d$ with $d \geq 3$, in the supercritical phase.  \medskip

Since its inception in the eighties, there has been considerable effort to understand the behavior of the directed polymer model in random environment, primarily on $\mathbb{Z}^d$, $d \geq 1$, and we refer to the monograph~\cite{comets2017directed} and the recent review~\cite{zygouras2024directed} for an overview. A prominent feature of this model is the emergence of a \textit{phase transition}, which occurs as a real parameter $\beta \geq 0$ (the inverse temperature) varies. The different phases of the model can be characterized by the asymptotic decay (or lack thereof) of the normalized partition function $W_n^\beta$, see~\eqref{eq:Normalized-Partition-function}, as the time horizon $n$ tends to infinity. Two critical parameters $\beta_c \leq \overline{\beta}_c$ have been introduced to separate  different phases of the model: in the \textit{weak disorder} regime $\beta \in [0,\beta_c)$, the limit of the normalized partition function (as $n  \to \infty$) is almost surely positive, in the \textit{strong disorder} regime $\beta \in (\beta_c,\infty)$ it is zero almost surely, and in the \textit{very strong disorder} regime $\beta \in (\overline{\beta}_c,\infty)$ the decay of $(W_n^\beta)_{n \geq 1}$ is exponentially fast in $n$ (thus by definition $\beta_c \leq \overline{\beta}_c$). On $\mathbb{Z}^d$, $d \in \{1,2\}$, one knows that $\overline{\beta}_c = 0$, see~\cite{comets2005majorizing} and~\cite{lacoin2010new} for $d = 1$ and $d = 2$, respectively. On $\mathbb{Z}^d$,  $d\geq 3$, the model fulfills $\beta_c > 0$ (see~\cite{bolthausen1989note}), and the question of \textit{sharpness}, meaning whether $\beta_c = \overline{\beta}_c$, was only recently solved positively in~\cite{junk2024strong} for environments  with a law unbounded from above, and in~\cite{junk2025coincidence} for more general environments (and certain long-range walks, but still on $\mathbb{Z}^d$, $d\geq 3$). In the latter two references, it was also proved that $\beta_c$ itself belongs to the weak disorder regime. \medskip

Several new phenomena emerge for the directed polymer model if the underlying simple random walk is considered on more general locally finite and connected graphs. For specific graphs (including trees, complete graphs, or hierarchical lattices), the analysis is sometimes simplified compared to the full lattice, see, e.g.,~\cite{MR3908906,MR3692316,
brunet2000probability,
buffet1993directed,MR4294278,
MR4531010,
comets2019random,cook-derrida,
derrida1989directed,derrida1988polymers,
eckmann1989largest,MR2594367}, but the understanding of the model is much more limited for general graphs with an irregular (potentially random) structure. A systematic approach to the model in a general framework was initiated in~\cite{cosco2021directed} (see also~\cite{kajino2020two}). In~\cite{nitzschner2025absence}, motivated by a question in~\cite{cosco2021directed}, it is proved that for a polymer in a random environment defined on a typical realization $\mathcal{C}_\infty$ of an infinite cluster in Bernoulli (bond) percolation on $\mathbb{Z}^d$, $d  \geq 3$, for $p \in (p_c(d),1)$ (with $p_c(d)$ the percolation threshold), one has $\beta_c(\mathcal{C}_\infty) = 0$, unlike on the full lattice. The main contribution of the present work is to show that, for any typical realization of $\mathcal{C}_\infty$ in $d\geq 3$, there is a (non-empty) subset of positive inverse temperatures for which the polymer is in a strong disorder regime, but very strong disorder does \textit{not} hold. In other words, $0 = \beta_c(\mathcal{C}_\infty) < \overline{\beta}_c(\mathcal{C}_\infty)$, which is in contrast to the sharpness of the phase transition on the full lattice (see~\cite{junk2024strong,
junk2025coincidence}). This behavior has been previously observed for random walks with very heavy-tailed jumps on $\mathbb{Z}^d$, see~\cite{viveros2023directed} and for biased random walks on certain supercritical Galton-Watson trees, see~\cite[Section 6]{cosco2021directed}, but to our knowledge appears to be new for nearest-neighbor walks on a random graph which closely resembles $\mathbb{Z}^d$, $d \geq 3$, on large scales. \medskip

We now describe our model and results in more detail. Throughout the article we tacitly assume that $d\geq 3$. We consider for $p \in [0,1]$ the probability measure $\mathbb{Q}_p$ on the space $\{0,1\}^{\mathbb{E}^d}$, where $\mathbb{E}^d$ denotes the set of nearest-neighbor edges in the Euclidean lattice, such that the canonical coordinates $\mu_e : \{0,1\}^{\mathbb{E}^d} \rightarrow \{0,1\}$, $e \in \mathbb{E}^d$, are i.i.d.~Bernoulli random variables with parameter $p$. Throughout the article, we fix 
\begin{equation}
\label{eq:Critical-p}
p \in (p_c(d),1), \qquad \text{where } p_c(d) = \inf\{p \in [0,1] \, : \, \mathbb{Q}_p[0 \leftrightarrow \infty ] > 0 \} (\in (0,1)),
\end{equation}
(where the event under $\mathbb{Q}_p$ stands for the existence of an unbounded nearest-neighbor path of edges with $\mu_e = 1$, starting from the origin). We denote by $\mathcal{C}_\infty (= \mathcal{C}_\infty(\mu))$ the infinite subgraph of $(\mathbb{Z}^d, \{e \in \mathbb{E}^d \, : \, \mu_e = 1 \})$ with minimal Euclidean distance to the origin, which exists and is fact $\mathbb{Q}_p$-a.s.~unique, see~\cite{grimmett1999percolation}. It is often convenient to center the cluster at the origin by considering the probability measure
\begin{equation}
\label{eq:Q0Def}
\widehat{\mathbb{Q}}_p = \mathbb{Q}_p\left[ \cdot \, | \, 0 \in \mathcal{C}_\infty \right].
\end{equation}
We are interested in results for a fixed, typical (under $\widehat{\mathbb{Q}}_p$) realization of $\mu \in \{0 \in \mathcal{C}_\infty\}$, and we consider for such a $\mu$ and $z \in \mathcal{C}_\infty$:
\begin{equation}
\begin{minipage}{0.8\linewidth}
$P_{z,\mu} =$ the canonical probability measure on $(\mathbb{Z}^d)^{\mathbb{N}_0}$, equipped with the cylinder $\sigma$-algebra generated by the canonical projections $(X_k)_{k \geq 0}$, such that the latter is a discrete-time simple random walk on $\mathcal{C}_\infty$ starting from $z$  
\end{minipage}
\end{equation}
(see Section~\ref{s.notation} for a precise definition). We then introduce the random environment by considering 
\begin{equation}
\label{eq:Definitions-of-omega}
\begin{minipage}{0.8\linewidth}
$\omega = (\omega(n,z)))_{n \in \mathbb{N},z\in \mathbb{Z}^d}$ i.i.d.~real-valued random variables under a probability measure $\mathbb{P}$, fulfilling the assumptions 
$$
\mathbb{E}[\omega(1,0)] = 0, \ \ \mathbb{E}[\omega(1,0)^2] = 1, \ \  \lambda(\beta) = \log \mathbb{E}[e^{\beta \omega(1,0)}] < \infty \text{ for }\beta \geq -a,
$$
for some constant $a > 0$, with $\mathbb{E}$ denoting the expectation with respect to $\mathbb{P}$.
\end{minipage}
\end{equation}
For a given percolation configuration $\mu \in \{0 \in \mathcal{C}_\infty\}$ and environment $\omega \in \mathbb{R}^{\mathbb{N} \times \mathbb{Z}^d}$, we then define the \textit{normalized partition function}
\begin{equation}
\label{eq:Normalized-Partition-function}
W_{n,\mu}^\beta(\omega) = E_{0,\mu}\left[\exp\left\{\beta \sum_{i = 1}^n \omega(i,X_i) - n\lambda(\beta) \right\}  \right],
\end{equation}
(with $E_{0,\mu}$ denoting the expectation under the probability measure $P_{0,\mu}$) of time horizon $n \in \mathbb{N}$, at inverse temperature $\beta \geq 0$. The \textit{directed polymer measure} at time horizon $n \in \mathbb{N}$ is then defined as a measure on nearest-neighbor paths $X = (X_k)_{k \geq 0}$ on $\mathbb{Z}^d$ (in fact, all trajectories are in $\mathcal{C}_\infty$, $P_{0,\mu}$-a.s.) by setting
\begin{equation}
\mathrm{d}P_{0,\mu}^{n,\beta}[X] = \frac{1}{W_{n,\mu}^\beta(\omega)} \exp\left\{\beta \sum_{i = 1}^n \omega(i,X_i) - n\lambda(\beta) \right\} \mathrm{d}P_{0,\mu}[X],
\end{equation}
which favors paths with larger values of $\omega$ and with $\beta \geq 0$ characterizing the disorder strength. The phase transition in $\beta$ between delocalization or localization can in fact be characterized by the asymptotic behavior of $W_{n,\mu}^\beta$. 
As observed in~\cite{bolthausen1989note} for the corresponding model on $\mathbb{Z}^d$ (see~\cite[Section 1.2]{cosco2021directed} for the set-up of general graphs), $(W_{n,\mu}^{\beta})_{n \geq 1}$ is a non-negative martingale with respect to the filtration $(\mathcal{F}_n)_{n \geq 1}$, given by $\mathcal{F}_n = \sigma(\omega(i,x) \, : \, i \leq n, x \in \mathbb{Z}^d)$, fulfilling
\begin{equation}
\mathbb{E}[W_{n,\mu}^{\beta}] = 1, \qquad \text{for all }n \in \mathbb{N}.
\end{equation}
By the standard martingale convergence theorem, the sequence $(W^\beta_{n,\mu})_{n \geq 1}$ converges $\mathbb{P}$-a.s.~to a limit $W_{\infty,\mu}^\beta$. If $W_{\infty,\mu}^\beta > 0$ holds $\mathbb{P}$-a.s., we say that the polymer is in the \textit{weak disorder regime}, whereas if $W_{\infty,\mu}^\beta = 0$ holds $\mathbb{P}$-a.s., the polymer is said to be in the \textit{strong disorder regime} (these are the only possibilities by a $0$-$1$-law, see~\cite[Proposition 1.2]{cosco2021directed}). One further has a monotonicity property, again by~\cite[Proposition 1.2]{cosco2021directed}, stating that if strong disorder holds for a given $\beta$, it holds for any $\beta' > \beta$, and one can therefore define for any fixed $\mu \in \{0 \in \mathcal{C}_\infty\}$ the critical inverse temperature
\begin{equation}
\beta_c(\mathcal{C}_\infty) = \inf\{ \beta \in [0,\infty) \, : \, \text{strong disorder holds at $\beta$} \} \in [0,+\infty]
\end{equation}
(with the convention $\inf \varnothing = +\infty$). To characterize a potentially strict sub-phase of the strong disorder regime, we define for fixed $\mu \in \{0 \in \mathcal{C}_\infty\}$ the \textit{quenched free energy} as 
\begin{equation}
\label{eq:Quenched-Free-Energy}
\mathfrak{f}(\beta,\mu) = \limsup_{n \rightarrow \infty} \frac{1}{n} \mathbb{E}\Big[ \log W^{\beta}_{n,\mu} \Big]=  \limsup_{n \rightarrow \infty}  \frac{1}{n}\log W_{n,\mu}^\beta,
\end{equation}
where the second equality holds $\mathbb{P}$-a.s., see~\cite[Proposition 1.8]{cosco2021directed}. By the same reference, one can define for a given $\mu \in \{0 \in \mathcal{C}_\infty\}$ the critical parameter
\begin{equation}
(\beta_c(\mathcal{C}_\infty) \leq ) \ \overline{\beta}_{c}(\mathcal{C}_\infty) = \inf\{\beta \in [0,\infty) \, : \, \mathfrak{f}(\beta,\mu) < 0 \} \in [0,+\infty],
\end{equation}
and we say that \textit{very strong disorder} holds at $\beta \in [0,\infty)$ if $\mathfrak{f}(\beta,\mu)< 0$, corresponding to an exponential decay of $W_{n,\beta}^\mu$ to zero, so 
\begin{equation}
\begin{cases}
\text{for $\beta > \overline{\beta}_c(\mathcal{C}_\infty)$, }& \mathfrak{f}(\beta,\mu)< 0, \\
\text{for $\beta < \overline{\beta}_c(\mathcal{C}_\infty)$, }& \mathfrak{f}(\beta,\mu) = 0.
\end{cases}
\end{equation}
 By~\cite[Theorem 1.1]{nitzschner2025absence}, one knows that the directed polymer on $\mathcal{C}_\infty$ is always in a strong disorder regime for positive $\beta$:
\begin{equation}
\label{eq:Main-result-previous}
\begin{minipage}{0.8\linewidth}
let $d \geq 3$ and $p\in (p_c(d),1)$, then for $\widehat{\mathbb{Q}}_p$-a.e.~realization of $\mu \in \{0 \in \mathcal{C}_\infty \}$, one has $\beta_c(\mathcal{C}_\infty) = 0$.
\end{minipage}
\end{equation}
Moreover, it follows from~\cite[Proposition 1.9]{cosco2021directed}, that very strong disorder holds for large enough $\beta$ (which may depend on $\mu$), for $\widehat{\mathbb{Q}}_p$-a.e.~$\mu \in \{0 \in \mathcal{C}_\infty\}$, also assuming that the law of $\omega(1,0)$ is unbounded from above. On the percolation cluster, this leaves open the question whether for a typical realization of $\mu\in \{0 \in \mathcal{C}_\infty\}$, one has $\overline{\beta}_c(\mathcal{C}_\infty) > 0$, see~\cite[Remark 4.4 (3)]{nitzschner2025absence}. Our main result answers this question affirmatively. 
\begin{theorem}
\label{thm:MainTheorem}
Let $d \geq 3$, $p \in (p_c(d),1)$, and assume that the law of $\omega(1,0)$ under $\mathbb{P}$ is unbounded from above. Then, for $\widehat{\mathbb{Q}}_p$-a.e.~realization of $\mu$ and the infinite cluster $\mathcal{C}_\infty$:
\begin{enumerate}
\item[(i)] one has 
\begin{equation}
\label{eq:Main-Claim-i}
\overline{\beta}_c(\mathcal{C}_\infty) = \overline{\beta}^{\,\mathrm{cluster}}_c
\end{equation}
for a deterministic constant $\overline{\beta}^{\, \mathrm{cluster}}_c \in [0,\infty)$;
\item[(ii)] the following deterministic lower bound holds: 
\begin{equation}
\label{eq:Main-result-bound}
\overline{\beta}^{\,\mathrm{cluster}}_c \geq \beta_{L^2}(\mathbb{Z}^d) = \sup\left\{\beta \geq 0  \, : \, \Big(e^{\lambda(2\beta) - 2\lambda(\beta)} -1 \Big)\sum_{k = 1}^\infty (P^{\mathbb{Z}^d}_0)^{\otimes 2}[X_k^{(1)} = X_k^{(2)}] < 1 \right\} (> 0),
\end{equation}
(with $X^{(1)}, X^{(2)}$ two independent simple random walks on $\mathbb{Z}^d$, $d \geq 3$, starting from the origin, under the measure $(P^{\mathbb{Z}^d}_0)^{\otimes 2}$).
\end{enumerate}
\end{theorem}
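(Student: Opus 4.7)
I would establish the $\widehat{\mathbb{Q}}_p$-a.s.~constancy of $\overline{\beta}_c(\mathcal{C}_\infty)$ by an ergodicity argument on the percolation environment. First, for $\widehat{\mathbb{Q}}_p$-a.e.~$\mu$ and any two vertices $x, y \in \mathcal{C}_\infty$, the quenched free energies starting from $x$ and from $y$ coincide: a finite path of length $k$ from $x$ to $y$ inside $\mathcal{C}_\infty$ allows one to bound the partition function starting at $y$ below in terms of the (time-shifted) partition function starting at $x$, and the resulting prefactor becomes negligible after $(1/n)\log$ as $n\to\infty$. Hence $\mathfrak{f}(\beta,\mu)$ depends only on the graph $\mathcal{C}_\infty(\mu)$ and is in particular invariant under the lattice translation group $\{\tau_z\}_{z\in\mathbb{Z}^d}$. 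Since $\mathbb{Q}_p$ is ergodic under this action, $\mathfrak{f}(\beta,\cdot)$ is $\mathbb{Q}_p$-a.s.~equal to a deterministic constant, and so is $\overline{\beta}_c(\mathcal{C}_\infty)$; the upper bound $\overline{\beta}^{\,\mathrm{cluster}}_c < \infty$ follows from~\cite[Proposition 1.9]{cosco2021directed}.

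\textbf{Part (ii).} For $\beta \in (0, \beta_{L^2}(\mathbb{Z}^d))$ I plan to deploy the second-moment method on a restricted partition function. The aim is to construct $\tilde W_n^\beta = \tilde W_{n,\mu}^\beta(\omega) \leq W_{n,\mu}^\beta(\omega)$ (pointwise in $\omega$) such that, for $\widehat{\mathbb{Q}}_p$-a.e.~$\mu$,
\begin{equation*}
\inf_n \mathbb{E}[\tilde W_n^\beta] \geq c(\mu) > 0, \qquad \sup_n \mathbb{E}[(\tilde W_n^\beta)^2] \leq C(\mu) < \infty.
\end{equation*}
Then the Paley-Zygmund inequality gives $\mathbb{P}[W_{n,\mu}^\beta \geq c(\mu)/2] \geq \delta(\mu) > 0$ uniformly in $n$, whence by reverse Fatou $\mathbb{P}[\limsup_n W_{n,\mu}^\beta \geq c(\mu)/2] \geq \delta(\mu)$; on this event $\limsup_n (1/n) \log W_{n,\mu}^\beta \geq 0$, and since $\mathfrak{f}(\beta,\mu)$ is $\mathbb{P}$-a.s.~deterministic by~\eqref{eq:Quenched-Free-Energy}, this forces $\mathfrak{f}(\beta,\mu) \geq 0$. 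Combining with the Jensen upper bound $\mathfrak{f}(\beta,\mu)\le 0$ yields $\mathfrak{f}(\beta,\mu) = 0$; since $\beta < \beta_{L^2}(\mathbb{Z}^d)$ was arbitrary, part~(i) gives~\eqref{eq:Main-result-bound}.

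\textbf{Main obstacle.} The heart of the argument is the construction of $\tilde W_n^\beta$. Because $\beta_c(\mathcal{C}_\infty) = 0$ by~\eqref{eq:Main-result-previous}, the unrestricted partition function is never $L^2$-bounded for $\beta > 0$, so one must restrict to a class of paths whose pairwise intersection sum is controlled by the \emph{$\mathbb{Z}^d$}-intersection sum appearing in~\eqref{eq:Main-result-bound}, rather than by the a priori larger, $\mu$-dependent sum for two independent SRWs on $\mathcal{C}_\infty$. A plausible strategy is coarse-graining at a scale $L$: partition $\mathbb{Z}^d$ into boxes of side $L$, declare a box ``good'' when $\mathcal{C}_\infty$ restricted to a slight enlargement has $\mathbb{Z}^d$-like crossings and connectivity, and restrict $\tilde W_n^\beta$ to polymer paths moving only through good boxes. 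Using renormalization arguments and quenched heat-kernel estimates for random walks on supercritical percolation clusters (in the spirit of the techniques underlying~\cite{nitzschner2025absence}), the coarse-grained walk should behave like a $\mathbb{Z}^d$-SRW, and a limiting procedure in $L$ should recover exactly the sharp $\mathbb{Z}^d$-threshold $\beta_{L^2}(\mathbb{Z}^d)$.
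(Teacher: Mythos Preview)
For part~(i) your argument is correct and essentially identical to the paper's.

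For part~(ii) there is a genuine gap: the target you set for $\tilde W_n^\beta$ is unachievable. Uniform bounds $\inf_n \mathbb{E}[\tilde W_n^\beta] \geq c(\mu) > 0$ and $\sup_n \mathbb{E}[(\tilde W_n^\beta)^2] \leq C(\mu) < \infty$, together with $\tilde W_n^\beta \leq W_{n,\mu}^\beta$, yield via Paley--Zygmund $\mathbb{P}[W_{n,\mu}^\beta \geq c(\mu)/2] \geq \delta(\mu) > 0$ for all $n$. But by~\eqref{eq:Main-result-previous} strong disorder holds for every $\beta > 0$, so $W_{n,\mu}^\beta \to 0$ $\mathbb{P}$-a.s., and dominated convergence forces $\mathbb{P}[W_{n,\mu}^\beta \geq c(\mu)/2] \to 0$, a contradiction. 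No coarse-graining can rescue this: the uniform $L^2$ control you seek is simply incompatible with $\beta_c(\mathcal{C}_\infty)=0$, so your reverse-Fatou step can never be fed.

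The paper circumvents this by allowing the restricted partition function to have \emph{decaying} first moment and \emph{growing} second moment, both sub-exponentially, and then bringing in a concentration inequality. Concretely, the walk is sent in time $\lfloor\sqrt n\rfloor$ to the center of a fully-open ``$\mathbb{Z}^d$-block'' of side $(\varepsilon\log n)^{1/d}$ and confined there until time $n$; this event $G_n$ has $P_{0,\mu}[G_n]\geq\exp(-cn/(\log n)^{2/d})$, and inside such a block two independent copies are literally $\mathbb{Z}^d$-walks, so the second moment of the restricted $V_{n,\mu}$ is at most $Ce^{\lambda_2(\beta)\sqrt n}$ for $\beta<\beta_{L^2}(\mathbb{Z}^d)$. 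Paley--Zygmund then only gives
\[
\mathbb{P}\Big[\log W_{n,\mu}^\beta \geq -c\,\tfrac{n}{(\log n)^{2/d}}\Big] \geq C'\exp\Big(-c'\,\tfrac{n}{(\log n)^{2/d}}\Big),
\]
which alone is too weak. The decisive extra input is the Liu--Watbled concentration bound~\eqref{eq:Concentration-result}: under very strong disorder one would have $\mathbb{E}[\log W_{n,\mu}^\beta]\leq -\eta n$ and hence $\mathbb{P}[\log W_{n,\mu}^\beta\geq -\tfrac{\eta}{2}n]\leq e^{-Cn}$, an \emph{exponentially} small upper bound that contradicts the sub-exponential lower bound above. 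This second-moment-plus-concentration scheme is the key mechanism your proposal is missing.
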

In particular, this shows that the phase transition is \textit{not} sharp, differently from what happens on the full lattice. \medskip

We briefly put this result in context. As mentioned above, the question of \textit{sharpness} of the phase transition of the directed polymer model on $\mathbb{Z}^d$, $d \geq 3$, namely the fact that
\begin{equation}
\beta_c(\mathbb{Z}^d) = \overline{\beta}_c(\mathbb{Z}^d) (> 0),
\end{equation} 
when the underlying random walk is simple
was recently resolved in~\cite{junk2024strong} and~\cite{junk2025coincidence}. For $d = 1$ and $d = 2$, $\beta_c(\mathbb{Z}^d) = 0$ was proved in~\cite{carmona2002partition} and~\cite{comets2003directed}, respectively. The fact that $\overline{\beta}_c(\mathbb{Z}^d) = 0$, for $d =1$ and $d = 2$ appeared in~\cite{comets2005majorizing} and~\cite{lacoin2010new}, respectively, implying the sharpness of the model also in $\mathbb{Z}^d$, $d \in \{1,2\}$. As remarked in~\cite[Section 2.5]{junk2024strong}, this sharpness result also holds for any transitive lattice with polynomial growth. Previously known cases in which $\beta_c < \overline{\beta}_c$ occurs include directed polymers for which the underlying random walk is either defined on $\mathbb{Z}^d$ but has very heavy tailed jumps (see~\cite{viveros2023directed}, see also~\cite[Proposition 2.11]{junk2025coincidence} for an example showing that $\beta_c = 0$ and $\overline{\beta}_c = \infty$ is possible), or corresponds to an upwardly biased (positive recurrent) random walk on a Galton-Watson tree with some additional assumptions on the offspring distribution (see~\cite[Theorem 6.1]{cosco2021directed}). Thus, to our knowledge the result obtained here seems to be new for an example of a nearest-neighbor random walk on a transient graph.
\medskip

Theorem~\ref{thm:MainTheorem} shows in combination with the main result of~\cite{nitzschner2025absence}, cf.~\eqref{eq:Main-result-previous}, that for $\widehat{\mathbb{Q}}_p$-almost every realization $\mu \in \{0 \in \mathcal{C}_\infty\}$ of the cluster, there is a  regime $\beta \in (0,\overline{\beta}^{\,\mathrm{cluster}}_c)$, in which $\mathfrak{f}(\beta,\mu) = 0$, meaning that $ W_{n,\mu}^{\beta} \to 0$ as $n \to \infty$, $\mathbb{P}$-a.s., but the decay happens at a smaller-than-exponential rate. This raises the question how fast this decay occurs. To that end, we give a partial answer as follows.
\begin{theorem}
\label{thm:Decay-rate}
Let $d \geq 3$, $p \in (p_c(d),1)$. There exists a constant $\kappa = \kappa(p) > 0$ such that for $\widehat{\mathbb{Q}}_p$-a.e.~$\mu \in \{0 \in \mathcal{C}_\infty\}$ with corresponding infinite cluster $\mathcal{C}_\infty$, if $\beta \in (0,\overline{\beta}^{\,\mathrm{cluster}}_c)$, $\mathbb{P}$-a.s.,
\begin{equation}
\label{eq:Bound-main-result}
\lim_{n \rightarrow \infty} \frac{\log^{\kappa}(n)}{n} \log W_{n,\mu}^\beta = -\infty.
\end{equation}
\end{theorem}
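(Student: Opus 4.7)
The plan is to combine a time coarse-graining of the partition function with a fractional moment argument, and to transfer the resulting bound on $\mathbb{E}[(W_{n,\mu}^\beta)^\theta]$ to an almost-sure statement via Markov's inequality and the Borel-Cantelli lemma. Fix throughout $\theta \in (0,1)$ and a block scale $k = k(n) = \lceil (\log n)^{\kappa_0} \rceil$ with $\kappa_0 = \kappa_0(p) > 0$ to be tuned, and set $N = \lfloor n/k \rfloor$.

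First, using the Markov property of the walk together with the $\mathbb{P}$-independence of $(\omega(i, \cdot))_{i}$ across disjoint time slabs $I_j = ((j-1)k, jk]$, I would decompose $W_{n,\mu}^\beta$ along the block endpoints $y_j := X_{jk}$, and apply the concavity inequality $(\sum_i a_i)^\theta \leq \sum_i a_i^\theta$ to obtain
\begin{equation*}
\mathbb{E}\bigl[(W_{n,\mu}^\beta)^\theta\bigr] \leq \sum_{y_1, \ldots, y_N \in \mathcal{C}_\infty} \prod_{j=1}^{N} q_k^{(j)}(y_{j-1}, y_j), \qquad q_k^{(j)}(y, z) := \mathbb{E}\bigl[\bigl(\widetilde W_{k,\mu}^{(j),\beta}(y, z)\bigr)^\theta\bigr],
\end{equation*}
with $y_0 := 0$, and where $\widetilde W_{k,\mu}^{(j),\beta}(y, z)$ is the block partition function of length-$k$ walks from $y$ to $z$ in the slab $I_j$, using the independent piece of the environment therein.

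The crucial step is then to establish a quantitative per-block estimate of the form
\begin{equation*}
q_k^{(j)}(y, z) \leq k^{-C}\, P_{y,\mu}(X_k = z) \qquad \text{uniformly in } y, z \in \mathcal{C}_\infty,
\end{equation*}
for some constant $C = C(\beta, p)$ that can be made arbitrarily large by taking $k$ sufficiently large. This refines, into a polynomial-in-$k$ decay, the qualitative inequality underlying the proof of $\beta_c(\mathcal{C}_\infty) = 0$ in \cite{nitzschner2025absence}. The argument should exploit the $\widehat{\mathbb{Q}}_p$-a.s.~presence of mesoscopic trap structures in the cluster at every large enough scale (e.g., dangling-end clusters of depth polylogarithmic in $k$), together with the unboundedness of $\omega$ from above, which activates such traps at all scales by allowing atypically favorable environment values along their branches. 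Iterating the per-block estimate and telescoping the sums via $\sum_z P_{y,\mu}(X_k = z) = 1$ yields
\begin{equation*}
\mathbb{E}\bigl[(W_{n,\mu}^\beta)^\theta\bigr] \leq k^{-CN} \leq \exp\bigl(-c\,n \log \log n / (\log n)^{\kappa_0}\bigr).
\end{equation*}

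Finally, Markov's inequality applied at the threshold $\exp\bigl(-c\,n \log\log n / (2\theta (\log n)^{\kappa_0})\bigr)$, combined with the Borel-Cantelli lemma, gives $\log W_{n,\mu}^\beta \leq -c'\,n \log\log n/(\log n)^{\kappa_0}$ for all large $n$, $\mathbb{P}$-a.s. Consequently $(\log n)^{\kappa_0}/n \cdot \log W_{n,\mu}^\beta \leq -c' \log\log n \to -\infty$, which is \eqref{eq:Bound-main-result} with $\kappa := \kappa_0$. The chief obstacle is the quantitative per-block estimate: turning the qualitative strong-disorder bound of \cite{nitzschner2025absence} into a polynomial-in-$k$ decay requires identifying trap mechanisms whose joint suppression of the partition function grows with the block length $k$, and the delicate balance between cluster geometry (depth and density of dangling-end structures at scale $k$) and environment fluctuations is what ultimately determines the admissible value of $\kappa$ in terms of $p$.
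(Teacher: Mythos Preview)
Your overall architecture---fractional-moment coarse-graining in time, a per-block estimate drawn from \cite{nitzschner2025absence}, then Borel--Cantelli---is the same as the paper's. The genuine gap is in the per-block estimate, at two points.

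First, the bound $q_k(y,z)\le k^{-C}P_{y,\mu}(X_k=z)$ with the quenched transition probability on the right is not what \cite{nitzschner2025absence} yields, and it is not clear how to get it. What one actually has is a bound on the free-endpoint fractional moment $\mathbb{E}[(W^\beta_{k,\mu}(y))^\theta]$; the paper bounds $q_k(y,z)\le \mathbb{E}[(W^\beta_{k,\mu}(y))^\theta]$ by dropping the endpoint constraint and then pays a combinatorial factor $(2k+1)^d$ when summing over $z$, which has to be beaten by the fractional-moment decay (of order $\exp(-c(\log k)^{5/4})$, not a fixed power $k^{-C}$). Your telescoping via $\sum_z P_{y,\mu}(X_k=z)=1$ is therefore unjustified as written. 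Also, the mechanism you invoke---unbounded environment ``activating traps''---is not how the bound in \cite{nitzschner2025absence} works: the decay of the fractional moment there is purely geometric (the walk is forced, with sizeable probability, to spend many steps inside open tubes of the cluster) and holds for every $\beta>0$ without any unboundedness assumption on $\omega$.

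Second, and this is the decisive issue, your claim that the per-block bound holds ``uniformly in $y,z\in\mathcal{C}_\infty$'' is false for a fixed realization $\mu$. The decay of $\mathbb{E}[(W^\beta_{k,\mu}(y))^\theta]$ requires the presence of good open tubes of length $\sim\log k$ near $y$, and for any fixed $k$ there are $y\in\mathcal{C}_\infty$ (far from the origin) where no such tubes exist. What one can establish (this is the paper's Lemma~\ref{lem:Main-lemma-Sec5}, a refinement of \cite{nitzschner2025absence}) is uniformity only over $y\in B(0,\exp(k^\xi))$ for a certain exponent $\xi=\xi(p)>0$ dictated by the stochastic integrability of the cluster geometry. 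Since the walk up to time $n$ stays in $B(0,n)$, the coarse-graining requires $\exp(k^\xi)\ge n$, i.e.\ $k\ge(\log n)^{1/\xi}$. This constraint is precisely what fixes $\kappa=1/\xi$; without it your ``to be tuned'' $\kappa_0$ is undetermined and the argument does not close.
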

This shows that for $\beta \in (0,\overline{\beta}^{\,\mathrm{cluster}}_c)$, the decay rate is almost exponential. The parameter $\kappa > 0$ could be made somewhat more explicit by carefully tracking certain exponential tail estimates related to the stochastic integrability of the relative cluster volume in large boxes, see~\eqref{eq:Exponent} below, see also Remark~\ref{rem:Final-Remarks}, but we do not expect our control on $\kappa$ to be sharp. \medskip

We give a brief outline of the proof of Theorem~\ref{thm:MainTheorem}. Part (i) follows essentially directly from the ergodicity of the shift operators and the fact that the critical inverse temperature does not depend on the starting point of the walk on the cluster, as was proved in~\cite{cosco2021directed}. For part (ii), we utilize a similar approach as in~\cite{nitzschner2025absence},
showing that there are sufficiently many \textit{good regions} attached to $\mathcal{C}_\infty$ that are present within a box of side-length $n$ (for $n \in \mathbb{N}$ large enough), and argue that these regions are visited by the underlying walk with a sizeable probability. In the present set-up, these regions will be chosen as ``solid $\mathbb{Z}^d$-like blocks'', of size $(\log n)^{\frac{1}{d}}$ (as opposed to tubes of size $\log n$ used in~\cite{nitzschner2025absence}, or also~\cite{abe2015effective} for a different question). We then use a coupling between two independent random walks on $\mathbb{Z}^d$ to prove that $\log W_{n,\mu}^\beta$ cannot decay linearly with a good probability, together with a classical concentration bound from~\cite{liu2009exponential} to obtain the claim. This strategy is inspired by the one employed in~\cite{cosco2021directed} for the case of biased random walks on certain Galton-Watson trees.
  \medskip

This article is organized as follows: in Section~\ref{s.notation}, further notation is introduced together with some useful known results on random walks, as well as a concentration inequality for the normalized partition function taken from~\cite{liu2009exponential} that we need in the subsequent sections. In the short Section~\ref{s.Deterministic}, we  prove part (i) of Theorem~\ref{thm:MainTheorem}. In Section~\ref{s.Boxes-percolation}, we prove a quantitative bound guaranteeing the existence of aforementioned ``good blocks''. In Section~\ref{s.ProofMainProp} the proof of part (ii) of the main result Theorem~\ref{thm:MainTheorem} is presented. Finally, we prove Theorem~\ref{thm:Decay-rate} concerning the decay of $W^{\beta}_{n,\mu}$ in Section~\ref{sec:Decay}.  \medskip

We will use the following convention concerning constants. We denote by $C,c,c',...$ positive constants depending only on the dimension $d$ which may change from place to place. Numbered constants $c_1,c_2,...$ refer to the value assigned to them at their first appearance in the text. Dependence on any additional parameters will be explicit in the notation.

%
%
%
%
%
%
%
%
\section{Notation and useful results}
\label{s.notation}

In this section further notation is introduced, together with some pertinent results for the simple random walk on $\mathbb{Z}^d$, $d \geq 3$, as well as on the infinite cluster of supercritical Bernoulli percolation. We also state a concentration bound on the logarithm of the normalized partition function from~\cite{liu2009exponential}, which will be useful in the proof of our main result in Section~\ref{s.ProofMainProp}. \medskip

We now introduce some further notation. Throughout the article, we will assume that $d \geq 3$, unless stated otherwise. We denote the set of non-negative integers by $\mathbb{N}_0 = \{0,1,2,...\}$ and the set of positive integers by $\mathbb{N} = \mathbb{N}_0 \setminus \{0\}$. It will be convenient to introduce the sub-lattice $\mathbb{Z}^d_o \subseteq \mathbb{Z}^d$ consisting of points $x \in \mathbb{Z}^d$ such that the sum of the coordinates $(x_i)_{1 \leq i \leq d}$ is odd, and the sub-lattice $\mathbb{Z}^d_e = \mathbb{Z}^d \setminus \mathbb{Z}^d_o$ for the points $x \in \mathbb{Z}^d$ such that the sum of the coordinates $(x_i)_{1 \leq i \leq d}$ is even. For real numbers $a,b$, we write $a \vee b$ and $a \wedge b$ for the maximum and minimum between $a$ and $b$, respectively, and let $\lfloor a \rfloor$ stand for the integer part of $a$. We denote by $| \, \cdot \, |$, $| \, \cdot \, |_1$, and $| \, \cdot \, |_\infty$ the Euclidean, $\ell^1$- and $\ell^\infty$-norms on $\mathbb{R}^d$, respectively. We call vertices $x,y \in \mathbb{Z}^d$ nearest neighbors if $|x-y| = 1$ and write $x \sim y$ in this case. The set of edges in $\mathbb{Z}^d$ is defined as $\mathbb{E}^d = \{\{x,y\} \, : \, x,y\in \mathbb{Z}^d, x \sim y \}$. For a given set $A \subseteq \mathbb{Z}^d$, we denote by $E(A)$ the set of edges $\{x,y\} \in \mathbb{E}^d$ with $A \cap \{x,y\} \neq \emptyset$. For $x \in \mathbb{Z}^d$ and $R \geq 0$, the closed ball (in $\ell^\infty$-norm) with center $x$ and radius $R$ is defined as $B(x,R) = \{y \in \mathbb{Z}^d \, : \, |x-y|_\infty \leq R\} \subseteq \mathbb{Z}^d$. We also write $B_o(x,R) = B(x,R) \cap \mathbb{Z}^d_o$ and $B_e(x,R) = B(x,R) \cap \mathbb{Z}^d_e$ for the points in $B(x,R)$ with odd resp.~even parity. Given two non-empty sets $A, B \subseteq \mathbb{Z}^d$, we write $d_\infty(A,B) = \inf\{|x-y|_\infty \, : \, x \in A, y \in B\}$ and abbreviate $d_\infty(\{x\},B)$ as $d_\infty(x,B)$ for $x \in \mathbb{Z}^d$. Similarly, we define $d_{1}(A,B) = \inf\{|x-y|_1 \, : \, x \in A, y \in B\}$ and use the same convention for $d_1(x,B)$. \medskip

We now state some simple facts about the discrete-time simple random walk on $\mathbb{Z}^d$. Let $(X_n)_{n \geq 0}$ denote the canonical process on $(\mathbb{Z}^d)^{\mathbb{N}_0}$ and let $P_x^{\mathbb{Z}^d}$ stand for the canonical law of a simple random walk on $\mathbb{Z}^d$ starting from $x \in \mathbb{Z}^d$. For $A \subseteq \mathbb{Z}^d$, we let $T_A = \inf\{n \in \mathbb{N}_0 \, : \, X_n \notin A\}$ stand for the exit time of the walk from $A$ (with the convention $\inf \varnothing = +\infty$). We also write 
\begin{equation}
p_n^A(x,y) = P_x^{\mathbb{Z}^d}[X_n = y, T_A > n], \qquad x,y \in \mathbb{Z}^d, n \in \mathbb{N}_0,
\end{equation}
for the transition probability of the walk killed upon exiting $A$. We will need the following classical Gaussian lower bound, taken from~\cite[Theorem 4.25]{barlow2017random}:
\begin{equation}
\label{eq:Killed-HK-bound}
\begin{minipage}{0.8\linewidth}
for $x_0 \in \mathbb{Z}^d$, $R \geq 1$, $x,y \in B(x_0,R/2)$, and $|x-y|_1 \leq n \leq R^2$, one has
$$
p_n^{B(x_0,R)}(x,y) + p_{n+1}^{B(x_0,R)}(x,y) \geq \frac{C}{n^{\frac{d}{2}}}\exp\left(-c \frac{|x-y|_1^2}{n} \right).
$$
\end{minipage}
\end{equation}
The following standard estimate on the confinement probability of a simple random walk in a box will be useful in Section~\ref{s.ProofMainProp}. Its proof is included for completeness.
\begin{lemma}
\label{lem:Confinement}
For any $n \in \mathbb{N}$ and integer $R \geq 10$, one has
\begin{equation}
\label{eq:Confinement-bound}
P_0^{\mathbb{Z}^d}[T_{B(0,R)} \geq n] \geq \exp\left(- c \frac{n}{R^2} \right).
\end{equation}
\end{lemma}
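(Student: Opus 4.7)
The plan is to iterate a ``restart'' estimate obtained from the Gaussian heat kernel lower bound~\eqref{eq:Killed-HK-bound}. Fix $R \geq 10$; assuming for the moment that $R$ is sufficiently large in terms of $d$ (say $R \geq 2d + 1$), set $\tau = 2\lfloor R^2/2 \rfloor$, an even integer with $cR^2 \leq \tau \leq R^2$. The complementary regime of bounded $R$, i.e.\ $R \leq C(d)$, is handled separately by the trivial single-path lower bound $P_0^{\mathbb{Z}^d}[T_{B(0,R)} \geq n] \geq (2d)^{-n}$, obtained by forcing the walk to follow the oscillating trajectory $(e_1,-e_1,e_1,-e_1,\ldots)$; the constant $c$ in the conclusion can be enlarged to absorb this regime.

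The key intermediate estimate is the following uniform \emph{restart bound}: for every $x \in B(0,R/2)$,
\begin{equation*}
P_x^{\mathbb{Z}^d}\bigl[T_{B(0,R)} > \tau, \ X_\tau \in B(0,R/2)\bigr] \geq c_0,
\end{equation*}
with $c_0 = c_0(d) > 0$ independent of $R$ and $x$. To prove it, I apply~\eqref{eq:Killed-HK-bound} with $x_0 = 0$ and $n = \tau$ to any $y \in B(0,R/2)$: since $|x-y|_1 \leq dR \leq \tau \leq R^2$ and $|x-y|_1^2/\tau \leq Cd^2$, I obtain
\begin{equation*}
p_\tau^{B(0,R)}(x,y) + p_{\tau+1}^{B(0,R)}(x,y) \geq \frac{C'}{R^d}.
\end{equation*}
Restricting $y$ to the points of $B(0,R/2)$ having the same parity as $x$ (in the sense of $\mathbb{Z}^d_o$ versus $\mathbb{Z}^d_e$) kills the second summand on the left, because $\tau$ is even and the walk at time $\tau+1$ lies on the opposite sublattice; summing the remaining lower bound over the $\asymp R^d$ such $y$ yields the claim.

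The second step is a straightforward Markov iteration. Since the restart bound holds uniformly over starting points in $B(0,R/2)$, conditioning successively on $\{X_{j\tau} \in B(0,R/2)\}$ for $j = 0, 1, \ldots, k-1$ and applying the Markov property yields
\begin{equation*}
P_0^{\mathbb{Z}^d}\bigl[T_{B(0,R)} > k\tau\bigr] \geq c_0^k \qquad \text{for every } k \in \mathbb{N}.
\end{equation*}
Given $n \in \mathbb{N}$, choosing $k = \lceil (n-1)/\tau \rceil \leq 1 + Cn/R^2$ gives
\begin{equation*}
P_0^{\mathbb{Z}^d}\bigl[T_{B(0,R)} \geq n\bigr] \geq c_0^k \geq \exp\bigl(-c\,n/R^2\bigr),
\end{equation*}
as desired. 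The only real (minor) obstacle in this plan is the parity bookkeeping forced by the shape $p_\tau + p_{\tau+1}$ of the estimate~\eqref{eq:Killed-HK-bound}; choosing $\tau$ even and restricting to the correct sublattice resolves it cleanly, and the rest is entirely routine.
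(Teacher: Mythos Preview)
Your proposal is correct and follows essentially the same route as the paper: establish a uniform ``restart'' bound from the killed heat-kernel lower bound~\eqref{eq:Killed-HK-bound} and iterate it via the Markov property. You are in fact more careful than the paper about the parity issue forced by the $p_n + p_{n+1}$ shape of~\eqref{eq:Killed-HK-bound} (choosing $\tau$ even and restricting to the correct sublattice), which the paper glosses over, and you also treat the bounded-$R$ regime explicitly.
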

\begin{proof}
By applying~\eqref{eq:Killed-HK-bound}, we see that for any $x \in B(0,R/2)$, one has
\begin{equation}
P_x^{\mathbb{Z}^d}[X_{R^2} \in B(0,R/2), T_{B(0,R)} > R^2] \geq \sum_{y \in B(0,R/2)} \frac{C}{R^d} \inf_{y' \in B(0,R/2)} \exp\left(-c \frac{|x-y'|_1^2}{R^2} \right) \geq c.
\end{equation}
By applying the strong Markov property $\lfloor n/R^2 \rfloor$ times we see that $P_{0}^{\mathbb{Z}^d}[T_{B(0,R)} \geq n] \geq c^{\lfloor n/R^2 \rfloor}$, hence the claim follows.
\end{proof}

Next, we turn to the simple random walk on $\mathcal{C}_\infty$. We equip the space $\{0,1\}^{\mathbb{E}^d}$ with the canonical $\sigma$-algebra $\mathcal{A}$ generated by the coordinate projections $(\mu_e)_{e \in \mathbb{E}^d}$ on $\{0,1\}^{\mathbb{E}^d}$, and recall the notation $\mathbb{Q}_p$ for the probability measure on $(\{0,1\}^{\mathbb{E}^d}, \mathcal{A})$ governing Bernoulli percolation with parameter $p \in [0,1]$ from the introduction, as well as the conditional measure $\widehat{\mathbb{Q}}_p$ from~\eqref{eq:Q0Def} for $p \in (p_c(d),1)$. For a given configuration $\mu \in \{0 \in \mathcal{C}_\infty\}$, the canonical law $P_{x,\mu}$ of a simple random walk on $\mathcal{C}_\infty$ starting from $x \in \mathcal{C}_\infty$ is the law of a Markov chain on $\mathbb{Z}^d$ with jump probabilities $q_\mu(x,y)$ for $x,y \in \mathbb{Z}^d$, defined by
\begin{equation}
\begin{split}
q_\mu(x,x) & = 1, \text{ if }\mu_e = 0 \text{ for all } e \in \mathbb{E}^d \text{ with } e \cap \{ x\} \neq \emptyset, \\
q_\mu(x,y) & = \frac{1}{\sum_{y \in \mathbb{Z}^d \, : \, y \sim x} \mu_{\{x,y\}} }, \text{ if } x \sim y \text{ and } \mu_{\{x,y\}} = 1, \\
q_\mu(x,y) & = 0, \text{ otherwise},
\end{split} 
\end{equation}
and we also use the canonical coordinate process $(X_n)_{n \geq 0}$ to denote this random walk. We furthermore define for $n\in \mathbb{N}_0$ the canonical time-shift operator by $n$ units, $\theta_n :(\mathbb{Z}^d)^{\mathbb{N}_0} \rightarrow (\mathbb{Z}^d)^{\mathbb{N}_0}$ which acts on trajectories $\nu \in (\mathbb{Z}^d)^{\mathbb{N}_0}$ via $(\theta_n\nu)(k) = \nu(k+n)$, $k \in \mathbb{N}_0$. \medskip 

We will use the following standard transition probability lower bound from~\cite{barlow2004random} for the random walk on $\mathcal{C}_\infty$. From the latter, one has a set $\Omega_1 \in \mathcal{A}$ with $\mathbb{Q}_p[\Omega_1] = 1$ and random variables $(R_x)_{x \in \mathbb{Z}^d}$ on $(\{0,1\}^{\mathbb{E}^d},\mathcal{A})$ fulfilling
\begin{equation}
\label{eq:R-def}
R_x(\mu) < \infty, \qquad \text{ for }\mu \in \Omega_1, x \in \mathcal{C}_\infty,
\end{equation}
with the property that for $\mu \in \Omega_1$, $x,y \in \mathcal{C}_\infty$, one has a Gaussian lower bound on the transition probability
\begin{equation}
\label{eq:HKLB}
\begin{split}
P_{x,\mu}[X_n = y] +  P_{x,\mu}[X_{n+1} = y] & \geq \frac{c}{n^{\frac{d}{2}}} \exp\left(-\frac{c'|x-y|^2}{n} \right), \qquad \text{for $n \geq R_x(\mu) \vee |x-y|_1$}
\end{split}
\end{equation}
(we use the convention $R_x(\mu) = \infty$ if $\mu \notin \Omega_1$, or $\mu \in \Omega_1$ but $x \notin \mathcal{C}_\infty$). The random variables $R_x$, for $x \in \mathbb{Z}^d$, admit the tail behavior
\begin{equation}
\label{eq:Integrability}
\mathbb{Q}_p[x \in \mathcal{C}_\infty, R_x \geq n] \leq c \exp\left(-n^{\vartheta} \right), \qquad n \in \mathbb{N},
\end{equation}
for some fixed exponent
\begin{equation}
\vartheta = \vartheta(p) > 0,
\end{equation}
see~\cite[(0.5)]{barlow2004random}.

We close this introductory section with an important result concerning the logarithm of the normalized partition function. To that end, recall its definition~\eqref{eq:Normalized-Partition-function} for a fixed realization $\mu \in \{0 \in \mathcal{C}_\infty\}$. By~\cite{liu2009exponential}, see also~\cite[Theorem 1.7]{cosco2021directed} for the straightforward adaptation to the case of general locally finite, connected, infinite graphs, one has for any given $\beta > 0$ a constant $C = C(\beta,\mu)$ such that
\begin{equation}
\label{eq:Concentration-result}
\mathbb{P}\left[\Big\vert \frac{\log W_{n,\mu}^\beta}{n} - \frac{\mathbb{E}[\log W_{n,\mu}^\beta]}{n} \Big\vert \geq \varepsilon \right] \leq \begin{cases}
2e^{-nC\varepsilon^2}, & \text{ for } 0 \leq \varepsilon \leq 1, \\
2e^{-nC\varepsilon}, & \text{ for } \varepsilon > 1.
\end{cases}
\end{equation}

\section{Proof of Theorem~\ref{thm:MainTheorem}, Part (i): Non-randomness of $\overline{\beta}_c(\mathcal{C}_\infty)$}
\label{s.Deterministic}

In this short section, we prove the first part of our main Theorem~\ref{thm:MainTheorem}, which states that the critical inverse temperature $\overline{\beta}_c(\mathcal{C}_\infty)$ is constant for $\widehat{\mathbb{Q}}_p$-almost every realization of the infinite cluster $\mathcal{C}_\infty$. Our argument is based on ergodicity of the family of shift operators under $\mathbb{Q}_p$ in combination with the fact that the definition of the quenched free energy $\mathfrak{f}(\beta,\mu)$ in~\eqref{eq:Quenched-Free-Energy} does not depend on the starting point of the underlying walk on the cluster, see~\cite[Proposition 1.8]{cosco2021directed}.

\begin{proof}[Proof of Theorem~\ref{thm:MainTheorem}, part (i)] 
We first define some auxiliary quantities. Let 
\begin{equation}
\tau_x : \{0,1\}^{\mathbb{E}^d} \rightarrow \{0,1\}^{\mathbb{E}^d}, \qquad  (\tau_x\mu)_e \stackrel{\mathrm{def}}{=} \mu_{e+x}, \qquad x \in \mathbb{Z}^d
\end{equation}
(where for $e = \{y,z\} \in \mathbb{E}^d$, we set $e+x = \{y+x,z+x\}$), denote the canonical shift operator (by $-x$) on the space of percolation configurations. We furthermore define for a fixed $\mu \in \{0 \in \mathcal{C}_\infty\}$, $\omega \in \mathbb{R}^{\mathbb{N} \times \mathbb{Z}^d}$, and general $y \in \mathcal{C}_\infty$, the quantities 
\begin{equation} 
\label{eq:Normalized-Partition-Function-other-starting-point}
\begin{split}
\mathfrak{f}(y,\beta,\mu) & = \limsup_{n \rightarrow \infty} \frac{1}{n} \mathbb{E}\Big[ \log W^{\beta}_{n,\mu}(\cdot,y) \Big], \text{ where} \\
W_{n,\mu}^\beta(\omega,y) &= E_{y,\mu}\left[\exp\left\{\beta \sum_{i = 1}^n \omega(i,X_i) - n\lambda(\beta) \right\}  \right].
\end{split}
\end{equation}
Note that $W_{n,\mu}^\beta(\omega,0) = W_{n,\mu}^\beta(\omega)$ and $\mathfrak{f}(0,\beta,\mu) = \mathfrak{f}(\beta,\mu)$ for any $\mu \in \{0\in \mathcal{C}_\infty\}$, $\omega \in \mathbb{R}^{\mathbb{N} \times \mathbb{Z}^d}$, and $\beta \geq 0$, see~\eqref{eq:Normalized-Partition-function} and~\eqref{eq:Quenched-Free-Energy}. Moreover, we define for $\mu \in \{0 \in \mathcal{C}_\infty\}$ and $y \in \mathcal{C}_\infty$ the quantity 
\begin{equation}
\overline{\beta}_c(\mathcal{C}_\infty,y) = \inf\{\beta \geq 0 \, : \, \mathfrak{f}(y,\beta,\mu) < 0 \}.
\end{equation}
Once $\mu \in \{0 \in \mathcal{C}_\infty\}$ is fixed, one knows by~\cite[Proposition 1.8]{cosco2021directed}  that $\mathfrak{f}(y,\beta,\mu)$ does not depend on $y \in \mathcal{C}_\infty$, and therefore we also see that
\begin{equation}
\label{eq:Ov-beta-constant}
\text{
$\overline{\beta}_c(\mathcal{C}_\infty,y)$ does not depend on $y \in \mathcal{C}_\infty$}.
\end{equation}
We define on the $\mathcal{A}$-measurable event $E = \{\text{there exists a unique infinite cluster $\mathcal{C}_\infty$} \}$ with $\mathbb{Q}_p[E] = 1$ (since $p \in (p_c(d),1)$) the (measurable) function
\begin{equation}
\widehat{x} : E \rightarrow \mathbb{Z}^d, \qquad \widehat{x}(\mu) = \text{ the unique point $y \in \mathcal{C}_\infty$ minimizing $|y|_\infty$},
\end{equation}
with ties broken by using lexicographic order. With these preparations, we define
\begin{equation}
F : \{0,1\}^{\mathbb{E}^d}  \rightarrow [0,\infty), \qquad \mu \mapsto \mathbbm{1}_E(\mu) \overline{\beta}_c(\mathcal{C}_\infty(\mu),\widehat{x}(\mu)) = \begin{cases}
 \overline{\beta}_c(\mathcal{C}_\infty(\mu),\widehat{x}(\mu)), & \text{if }\mu \in E, \\
 0, & \text{if }\mu \notin E.
\end{cases}
\end{equation}
We now argue that 
\begin{equation}
\label{eq:Shift-invariance}
F \circ \tau_x = F, \qquad \text{ for all $x \in \mathbb{Z}^d$}.
\end{equation}
Indeed, suppose that $\mu \notin E$, then $\tau_x \mu \notin E$, and~\eqref{eq:Shift-invariance} holds in this case (with both sides being equal to zero), so we can assume that $\mu \in E$. In the latter case, we have that $\tau_x \mu \in E$ and
\begin{equation}
\begin{split}
F(\tau_x\mu) = \overline{\beta}_c(\mathcal{C}_\infty(\tau_x\mu)),\widehat{x}(\tau_x\mu)) \stackrel{\eqref{eq:Ov-beta-constant}}{=} \overline{\beta}_c(\mathcal{C}_\infty(\mu),\widehat{x}(\mu)) = F(\mu),
\end{split}
\end{equation}
since $\mathcal{C}_\infty(\tau_x\mu)$ is a translate of $\mathcal{C}_\infty(\mu)$, proving~\eqref{eq:Shift-invariance} in this case as well. Since $(\tau_x)_{x \in \mathbb{Z}^d}$ is shift-invariant and ergodic under $\mathbb{Q}_p$, we see that $F$ is $\mathbb{Q}_p$-almost surely constant, and we call this value $\overline{\beta}_c^{\, \mathrm{cluster}}$. Finally, since $\{0 \in \mathcal{C}_\infty\} \subseteq E$, we obtain (using again~\eqref{eq:Ov-beta-constant}) the claim~\eqref{eq:Main-Claim-i}.
\end{proof}

\section{Good open tubes and good blocks in $\mathcal{C}_\infty$}
\label{s.Boxes-percolation}

In this section, we define certain ``favorable regions'' in the infinite cluster of Bernoulli bond percolation which we show to be present at a sufficient, near-volume like number in large enough boxes. Recall assumption~\eqref{eq:Critical-p} and the notation $\mathcal{C}_\infty$ for the percolation cluster below, and define $\theta(p) = \mathbb{Q}_p[0 \in \mathcal{C}_\infty]$ ($ > 0$, by~\eqref{eq:Critical-p}).

\medskip We start by recalling the notion of \textit{good open tubes} from~\cite{nitzschner2025absence}, which will play a prominent role in the proof of a lower bound on the decay rate of $\log W_{n,\mu}^\beta$ when $\beta \in (0,\overline{\beta}_c^{\mathrm{cluster}})$, see Section~\ref{sec:Decay}. To that end, consider the canonical basis $\{e_1,...,e_d\}$ of $\mathbb{R}^d$. We call the set of edges in 
\begin{equation}
T_{x,L} = \Big\{\{x,x+e_1\},\{x+e_1,x+2e_1\},...,\{x+(\lfloor L \rfloor -1)e_1,x+\lfloor L\rfloor e_1\} \Big\}
\end{equation}
 an \textit{open tube} (in direction $e_1$) of length $L \in (0,\infty)$ with base point $x \in \mathbb{Z}^d$ if $\mu_{e} = 1$ for every $e \in T_{x,L}$, and $\mu_e = 0$ for every adjacent edge in coordinate directions perpendicular to $e_1$, with (possibly) the exception of edges $e$ incident to $x$ (i.e.~such that $\{x\} \cap e \neq \varnothing$), and for $e = \{x+\lfloor L\rfloor e_1,x+(\lfloor L \rfloor+1)e_1) \}$  (by convention, $T_{x,L} = \emptyset$ if $\lfloor L \rfloor = 0$). We define the outer (edge) boundary $\partial_{\text{out}}T_{x,L}$ of the tube as the set 
 \begin{equation}
\partial_{\text{out}}T_{x,L} = \Big\{ e \in \mathbb{E}^d \, : \, d_{1}(e, V(T_{x,L}) \setminus \{x\}) = 1 \Big\},
\end{equation} 
namely all edges at $\ell^1$-distance one to $V(T_{x,L}) \setminus \{x\}$ with $V(T_{x,L}) = \{x,x+e_1,...,x+\lfloor L \rfloor e_1\}$ the vertices belonging to the tube. We call an open tube $T_{x,L}$ \textit{good} if all edges in $\partial_{\text{out}}T_{x,L}$ are open. It was proved in~\cite[Lemma 4.1]{nitzschner2025absence} that there are $c(p)n^{d-\delta(\varepsilon)}$ many good open tubes of length $\varepsilon \log(n)$ present with high probability in $B(z,n) \cap \mathcal{C}_\infty$, with $\delta(\varepsilon) \rightarrow 0$ as $\varepsilon \rightarrow 0$, uniformly for all $z \in B(0,n^4)$ and we show in Lemma~\ref{lem:ManyTubes} below a slight generalization of this result adapted to our purposes. \medskip

We now define a second class of favorable regions of $\mathcal{C}_\infty$, which we call \textit{good blocks}, and which can essentially be thought of as pieces of $\mathcal{C}_\infty$ that ``locally look like'' full boxes in $\mathbb{Z}^d$. We show in Lemma~\ref{thm:Many-good-boxes} below that there are $n^{d - \delta(\varepsilon)}$ many good blocks of side length $(\varepsilon \log(n))^{\frac{1}{d}}$ present in $B(0,n) \cap \mathcal{C}_\infty$, where $\delta(\varepsilon) \to 0$ as $\varepsilon \to 0$, with high probability. More precisely, for a real number $L > 0$ and $x \in \mathbb{Z}^d$, we say that $B(x,L)$ is a \textit{good block} if 
\begin{equation}
\mu_e = 1 \qquad \text{for all }e \in E(B(x,L))
\end{equation}
(recall that $E(B(x,L))$ stands for the edges in $\mathbb{E}^d$ with at least one endpoint in $B(x,L)$). We now state the main result of the present section.
\begin{lemma}
\label{thm:Many-good-boxes}
Let $\delta > 0$. There exists $\varepsilon_0(p,\delta) > 0$ such that for $\widehat{\mathbb{Q}}_p$-a.e.~$\mu \in \{0 \in \mathcal{C}_\infty\}$, there is $N_{\mathrm{block}}(\mu,\delta) < \infty$ such that
\begin{equation}
\label{eq:Claim-Good-boxes}
\begin{minipage}{0.8\linewidth}
both $B_o(0,n)$ and $B_e(0,n)$ each contain at least $c(p)n^{d-\delta}$ points $x \in \mathcal{C}_\infty$ such that $B(x,(\varepsilon \log(n))^{\frac{1}{d}})$ is a good block,
\end{minipage}
\end{equation}
for all $n \geq N_{\mathrm{block}}(\mu,\delta)$ and $\varepsilon \in (0,\varepsilon_0)$.
\end{lemma}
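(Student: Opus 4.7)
My plan is to adapt the counting strategy of~\cite[Lemma 4.1]{nitzschner2025absence}, which establishes the analogous statement for good \emph{tubes}, verifying that essentially the same argument works once the tube is replaced by a solid ball. Write $L = (\varepsilon \log n)^{1/d}$ and note that $|E(B(x, L))| \leq c_0(d) L^d$. The FKG inequality applied to the two increasing events $\{B(x, L) \text{ is a good block}\}$ and $\{x \in \mathcal{C}_\infty\}$ immediately yields the one-point bound
$$
\mathbb{Q}_p\big[B(x, L) \text{ good block}, \, x \in \mathcal{C}_\infty \big] \geq p^{|E(B(x, L))|} \theta(p) \geq \theta(p) \, n^{-\delta(\varepsilon)},
$$
with $\delta(\varepsilon) := c_0(d) \varepsilon \log(1/p)$; I then take $\varepsilon_0 := \delta/(c_0(d) \log(1/p))$ so that $\delta(\varepsilon) < \delta$ whenever $\varepsilon < \varepsilon_0$.

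For the almost-sure lower bound, I would combine a Chernoff estimate with Borel--Cantelli along integer $n$. Consider a sub-lattice $\Lambda_n \subseteq B(0, n/2)$ with spacing $4L+3$ along each coordinate axis (of cardinality $\geq c (n/L)^d$), and associate to each $x \in \Lambda_n$ the event $A_x := \{B(x, 2L) \text{ is a good block}\}$. Since these events depend on pairwise disjoint edge sets $E(B(x, 2L))$, they are mutually independent Bernoullis of parameter $\geq p^{c L^d} \geq n^{-\delta(\varepsilon)}$. A standard Chernoff bound then gives
$$
\mathbb{Q}_p\Big[\#\{x \in \Lambda_n : A_x \text{ holds}\} \leq c(p) n^{d-\delta(\varepsilon)}/L^d \Big] \leq \exp\!\big(-c' n^{d - \delta(\varepsilon)}/L^d\big),
$$
which is summable in $n$. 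Borel--Cantelli therefore yields a $\mathbb{Q}_p$-a.s.\ lower bound of $c(p) n^{d - \delta(\varepsilon)}/L^d$ on the number of good ``super-blocks'' of side $2L$, for all $n$ large enough. Each such good super-block $B(x_0, 2L)$, when contained in $\mathcal{C}_\infty$, contributes $\sim L^d/2$ distinct vertices $y \in B(x_0, L) \cap \mathbb{Z}^d_o$ (respectively $\mathbb{Z}^d_e$) satisfying $B(y, L) \subseteq B(x_0, 2L)$, and these $y$ are not double-counted across super-blocks because the inflated balls $B(x, L+1)$ for $x \in \Lambda_n$ are pairwise disjoint; this delivers $c(p) n^{d-\delta(\varepsilon)}$ qualifying centers of each parity.

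The main technical obstacle I foresee is the \emph{attachment} step: ensuring that a positive fraction of the good super-blocks counted above are actually contained in $\mathcal{C}_\infty$. The one-point conditional probability for this is at least $\theta(p)$ by FKG, but the events $\{B(x, 2L) \subseteq \mathcal{C}_\infty\}$ across $x \in \Lambda_n$ are not independent. I would address this by attaching to $A_x$ an auxiliary short-range event $D_x := \{\text{some vertex adjacent to } B(x, 2L) \text{ is connected by an open path in } B(x, 3L) \setminus B(x, 2L) \text{ to } \partial B(x, 3L)\}$, which is independent of $A_x$ (disjoint edge sets), has probability tending to one with $L$ by standard supercritical annulus-crossing estimates (Antal--Pisztora type), and is jointly independent across well-separated $x$'s. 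Intersecting with a global event of $\mathbb{Q}_p$-probability tending to one -- asserting via a Pisztora-style renormalization scheme that every cluster in $B(0, n)$ of diameter exceeding $L$ is a subcluster of $\mathcal{C}_\infty$ -- then upgrades $A_x \cap D_x$ to the inclusion $B(x, 2L) \subseteq \mathcal{C}_\infty$, and the same Chernoff/Borel--Cantelli argument applied to $A_x \cap D_x$ delivers the required count. Passage from $\mathbb{Q}_p$-a.s.\ to $\widehat{\mathbb{Q}}_p$-a.s.\ statements is immediate from $\mathbb{Q}_p[0 \in \mathcal{C}_\infty] = \theta(p) > 0$.
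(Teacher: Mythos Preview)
Your overall architecture is sound and genuinely different from the paper's: you decouple the counting into well-separated independent super-blocks and run Chernoff, whereas the paper keeps the full (dependent) sum $\sum_x \mathbbm{1}_{\{x\in\mathcal{C}_\infty,\ B(x,L)\text{ good}\}}$ and controls its fluctuations by an exponential Efron--Stein inequality, bounding the effect of flipping one edge via $|(\mathcal{C}_\infty^e\triangle\mathcal{C}_\infty)\cap B(0,n)|$ (this is exactly the input from~\cite[Proposition 11, Lemma A.1]{dario2021quantitative}). The paper's route therefore never needs a separate ``attachment'' step.

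The gap in your argument is precisely in that attachment step, and it is a matter of scales. Your global event asserts that every open cluster in $B(0,n)$ of diameter exceeding $L=(\varepsilon\log n)^{1/d}$ is part of $\mathcal{C}_\infty$. The complement is bounded by a union over $B(0,n)$ of the finite-cluster tail, giving
\[
\mathbb{Q}_p[\text{global event fails}]\;\le\; Cn^d\,\mathbb{Q}_p\big[0\leftrightarrow\partial B(0,L),\ 0\not\leftrightarrow\infty\big]\;\le\; Cn^d e^{-cL}\;=\;Cn^d\exp\!\big(-c(\varepsilon\log n)^{1/d}\big),
\]
and for $d\ge 2$ this diverges; it is certainly not summable, so Borel--Cantelli fails. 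Equivalently, the conditional probability that a good super-block is \emph{not} in $\mathcal{C}_\infty$ is only $\exp(-c'L^{d-1})=\exp(-c'(\varepsilon\log n)^{(d-1)/d})$, which decays slower than any power of $n$ and in particular much slower than the good-block cost $n^{-\delta(\varepsilon)}$; so you cannot beat the good count by a first-moment bound on the bad count either. (A side remark: with spacing $4L+3$ the annuli $B(x,3L)$ overlap, so the $A_x\cap D_x$ are not independent as stated; but this is minor.)

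The fix within your framework is to enlarge the annulus: take $D_x$ to be a crossing of $B(x,K\log n)\setminus B(x,2L)$ for a large constant $K$, and increase the lattice spacing to $\gtrsim K\log n$. Then $A_x\cap D_x$ are independent across $x\in\Lambda_n$, $\mathbb{Q}_p[D_x]\to 1$, and on $A_x\cap D_x$ the cluster has diameter $\gtrsim K\log n$, so the union-bound cost for ``some such cluster is finite'' is $\le Cn^d e^{-cK\log n}$, summable for $K$ large. You lose a factor $(\log n)^{d-1}$ in the count of super-blocks, but since each contributes $\sim L^d\sim\log n$ centers of each parity you still get $\ge c\,n^{d-\delta}$. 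With this modification your argument goes through; as written, however, the Pisztora step does not hold at scale $(\log n)^{1/d}$.
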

\begin{proof}
The proof is an adaptation of~\cite[Lemma 4.1]{nitzschner2025absence}, and we explain the necessary changes. We consider for $n \in \mathbb{N}$ and $\varepsilon > 0$ the relative fraction of points in $B_o(0,n) \cap \mathcal{C}_\infty$ that are centers of good blocks of size $(\varepsilon \log(n))^{\frac{1}{d}}$, i.e.
\begin{equation}
\mathcal{W}_n(\varepsilon) = \frac{1}{|B_o(0,n)|} \sum_{x \in B_o(0,n)} \mathbbm{1}_{\{x \in \mathcal{C}_\infty, B(x,(\varepsilon \log(n))^{\frac{1}{d}}) \text{ is a good block} \}}.
\end{equation}
We then define for $n \in \mathbb{N}$:
\begin{equation}
\theta(n,p,\varepsilon) = \mathbb{Q}_p[x \in \mathcal{C}_\infty, B(x,(\varepsilon \log(n))^{\frac{1}{d}}) \text{ is a good block}].
\end{equation}
We recall that an event $I \in \mathcal{A}$ is called \textit{increasing} if for configurations $\mu,\mu' \in \{0,1\}^{\mathbb{E}^d}$ with $\mu_e \leq \mu_e'$ for all $e \in \mathbb{E}^d$, $\mu \in I$ implies $\mu'\in I$. Since both $\{x \in \mathcal{C}_\infty\}$ and $\{B(x,(\varepsilon \log(n))^{\frac{1}{d}}) \text{ is a good block} \}$ are increasing events, we can directly use the FKG inequality (see, e.g.,~\cite{grimmett1999percolation}) and obtain for any $n \in \mathbb{N}$,
\begin{equation}
\begin{split}
\theta(n,p,\varepsilon) & \geq \mathbb{Q}_p[x \in \mathcal{C}_\infty] \cdot  \mathbb{Q}_p[ B(x,(\varepsilon \log(n))^{\frac{1}{d}}) \text{ is a good block}] \\
& = \theta(p) \cdot p^{|E(B(x,(\varepsilon \log(n))^{\frac{1}{d}} )) |} \\
& \geq \theta(p) \cdot p^{c \lfloor \varepsilon \log(n) \rfloor } \geq \frac{\theta(p)}{n^{\varepsilon c_1(p)}},
\end{split}
\end{equation}
using the fact that $|E(B(x,\ell))| \leq C(2\ell + 1)^d$ for $\ell \in \mathbb{N}$ and $x \in \mathbb{Z}^d$. We center $\mathcal{W}_n(\varepsilon)$ by considering
\begin{equation}
\overline{\mathcal{W}}_n(\varepsilon) = \mathcal{W}_n(\varepsilon) - \theta(n,p,\varepsilon), \qquad n \in \mathbb{N},
\end{equation}
which fulfills $\mathbb{E}_{\mathbb{Q}_p}[\overline{\mathcal{W}}_n(\varepsilon)] = 0$ due to translation invariance of $\mathbb{Q}_p$ (where $\mathbb{E}_{\mathbb{Q}_p}$ denotes the expectation under $\mathbb{Q}_p$). We define $\varepsilon_0 = \varepsilon_0(p,\delta) = \frac{\delta}{c_1(p)}$ and show that
\begin{equation}
\label{eq:Main-Lemma-Bound-to-show}
\mathbb{Q}_p\Big[|\overline{\mathcal{W}}_n(\varepsilon) | \geq \frac{\theta(p)}{2n^{c_1(p)\varepsilon}}  \Big] \leq C\exp\left(-c(p,\delta) n^{c_2(p,\delta)} \right).
\end{equation}
Since the latter is summable over $n \in \mathbb{N}$, an application of the Borel-Cantelli lemma yields the claim~\eqref{eq:Claim-Good-boxes}. To show~\eqref{eq:Main-Lemma-Bound-to-show}, we argue in the same fashion as in~\cite[(4.15)--(4.16)]{nitzschner2025absence} and consider the effect on $\overline{\mathcal{W}}_n(\varepsilon)$ of changing the value of a single edge (this is inspired by a large-deviation type argument in~\cite[Proposition 11]{dario2021quantitative}). To that end, consider
\begin{equation}
\begin{minipage}{0.8\textwidth}
$\widetilde{\mu} = \{\widetilde{\mu}_e \}_{e \in \mathbb{E}^d}$ an i.i.d.~copy of $\mu$, independent of the latter (by working on the canonical space $\{0,1\}^{\mathbb{E}^d} \times \{0,1\}^{\mathbb{E}^d}$ equipped with $\mathbb{Q}_p^{\otimes 2}$, and $(\mu,\widetilde{\mu})$ the canonical projection),
\end{minipage}
\end{equation}
and define for a fixed edge $e \in \mathbb{E}^d$ the resampled configuration $\mu^e_{e'} = \mu_{e'}\mathbbm{1}_{\{e' \neq e \}} + \widetilde{\mu}_{e'}\mathbbm{1}_{\{e' = e \}}$, $e' \in \mathbb{E}^d$. We denote by $\overline{\mathcal{W}}^e_n(\varepsilon)$ the modification of the random variable $\overline{\mathcal{W}}_n(\varepsilon)$ in which $\mu$ is replaced by $\mu^e$, define 
\begin{equation}
\mathcal{C}^e_\infty = \text{ the infinite cluster given configuration $\mu^e$},
\end{equation}
and call $B(x,L)$ an $e$-good block if it is a good block in configuration $\mu^e$.  We now see that
 \begin{equation}
 \label{eq:Deviation-W}
 \begin{split}
 |\overline{\mathcal{W}}^e_n(\varepsilon) - \overline{\mathcal{W}}_n(\varepsilon)| & \leq \frac{1}{|B_o(0,n)|} \bigg\vert \sum_{x \in B_o(0,n)} (\mathbbm{1}_{\{x \in \mathcal{C}_\infty^e, B(x,(\varepsilon \log(n))^{\frac{1}{d}} \text{ is an $e$-good block} \}} \\
 & \qquad -\mathbbm{1}_{\{x \in \mathcal{C}_\infty, B(x,(\varepsilon \log(n))^{\frac{1}{d}} \text{ is a good block} \}} ) \bigg\vert \\
 & \leq \frac{1}{|B_o(0,n)|} | (\mathcal{C}_\infty^e \triangle \mathcal{C}_\infty) \cap B_o(0,n) | + \frac{C\varepsilon \log(n)}{|B_o(0,n)|} \mathbbm{1}_{\{e \in E(B(0,3n))\} },
 \end{split}
 \end{equation}
 (with $\triangle$ denoting the symmetric difference between sets), where we used that 
 \begin{equation}
 \begin{split}
 & |\{x \in B_o(0,n) \, : \, B(x,(\varepsilon \log(n))^{\frac{1}{d}}) \text{ is an $e$-good block} \} \\
& \qquad\triangle \{x \in B_o(0,n) \, : \, B(x,(\varepsilon \log(n))^{\frac{1}{d}}) \text{ is a good block} \}| \leq C\varepsilon\log(n) \mathbbm{1}_{\{e \in E(B(0,3n)) \}},
 \end{split}
 \end{equation}
 as we now explain. Indeed, the only points $x \in B_o(0,n)$ for which the event that $B(x,L)$ is a good block is possibly affected by changing the value of edge $e$ are those for which $d_\infty(x,e) \leq L$, and there are at most $CL^d$ such points. From~\eqref{eq:Deviation-W}, the proof is concluded in the exact same way as in~\cite[(4.17)--(4.25)]{nitzschner2025absence}, and we omit the details (one actually uses~\cite[Lemma A.1]{dario2021quantitative} to calculate an exponential moment of the first summand in the third line of display~\eqref{eq:Deviation-W} and an exponential Efron-Stein-type inequality from~\cite{armstrong2017optimal}).
\end{proof}

We will also need the following refinement of~\cite[Lemma 4.1]{nitzschner2025absence} for tubes. The principal difference compared to~\cite[Lemma 4.1]{nitzschner2025absence} is that the uniformity of the statement is required over the much larger sup-norm ball $B(0,\exp(n^{\xi}))$ instead of $B(0,n)$, where with an exponent $\xi = \xi(p,\delta)$ defined in~\eqref{eq:Exponent}. Informally, this exponent is related to  the stochastic integrability of the relative volume of the cluster $\mathcal{C}_\infty$ in a large box obtained in~\cite[Proposition 11]{dario2021quantitative}, see also Remark~\ref{rem:Final-Remarks}. Again, we only explain the necessary adaptations.
\begin{lemma}
\label{lem:ManyTubes}
Let $\delta > 0$. There exists $\varepsilon_0' = \varepsilon_0'(p,\delta) > 0$ and $\xi = \xi(\delta,p) > 0$ such that for $\widehat{\mathbb{Q}}_p$-a.e.~$\mu \in \{0 \in \mathcal{C}_\infty \}$, there exists $\widetilde{N}_{\textnormal{tube}} = \widetilde{N}_{\textnormal{tube}}(\mu,\delta) < \infty$, such that for all $n \geq \widetilde{N}_{\textnormal{tube}}$, one has:
\begin{equation}
\label{eq:ManyTubes}
\begin{split}
&\textit{For every $z \in B(0,\exp(n^{\xi}))$, the sets $B_o(z,n^{\frac{1}{4}})$ and $B_e(z,n^{\frac{1}{4}})$ each contain at least} \\
& \textit{$c(p) n^{\frac{d-\delta}{4}}$ points $x \in \mathcal{C}_\infty$ for which $T_{x,\varepsilon \log(n)}$ is a good open tube},
 \end{split}
\end{equation}
whenever $\varepsilon \in (0,\frac{1}{4} \varepsilon_0')$.
\end{lemma}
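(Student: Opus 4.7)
The plan is to follow closely the proof of Lemma~\ref{thm:Many-good-boxes} (which is itself modelled on~\cite[Lemma 4.1]{nitzschner2025absence}), but to work with the smaller local box of side $n^{1/4}$ and then lift the resulting single-point estimate to every $z \in B(0,\exp(n^{\xi}))$ by a union bound. Because~\cite[Lemma 4.1]{nitzschner2025absence} already handles good open tubes rather than good blocks, all of the single-box probabilistic ingredients are available; the only genuinely new content is the quantitative choice of the exponent $\xi$.

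First, I would introduce, for each fixed $z$, the local relative density
$$\mathcal{T}_z^o(\varepsilon) = \frac{1}{|B_o(z,n^{1/4})|} \sum_{x \in B_o(z,n^{1/4})} \mathbbm{1}_{\{x \in \mathcal{C}_\infty,\ T_{x,\varepsilon \log n} \text{ is a good open tube}\}},$$
and its analogue $\mathcal{T}_z^e(\varepsilon)$ on the even sublattice. By translation invariance of $\mathbb{Q}_p$, the expectations $\mathbb{E}_{\mathbb{Q}_p}[\mathcal{T}_z^{o/e}(\varepsilon)]$ do not depend on $z$, and a direct product-form computation (as performed in~\cite[Lemma 4.1]{nitzschner2025absence}) gives a lower bound of the form $\mathbb{E}_{\mathbb{Q}_p}[\mathcal{T}_z^o(\varepsilon)] \geq c(p)\, n^{-c_1'(p)\varepsilon}$, with a deterministic constant $c_1'(p) > 0$. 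Setting $\varepsilon_0'(p,\delta) = \delta/c_1'(p)$ then guarantees this expectation is at least $c(p)\, n^{-\delta/4}$ as soon as $\varepsilon < \varepsilon_0'/4$, which is precisely the source of the $1/4$ factor in the admissible range of $\varepsilon$.

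Next, I would rerun verbatim the resampling argument from~\eqref{eq:Deviation-W} and the lines that follow, combined with the exponential Efron-Stein-type inequality of~\cite{armstrong2017optimal} and the stochastic integrability of the relative cluster volume from~\cite[Proposition 11]{dario2021quantitative}, but with the reference box $B(0,3n)$ replaced throughout by $B(z, 3 n^{1/4})$. This yields a stretched-exponential deviation bound of the shape
$$\mathbb{Q}_p\Bigl[|\mathcal{T}_z^o(\varepsilon) - \mathbb{E}_{\mathbb{Q}_p}[\mathcal{T}_z^o(\varepsilon)]| \geq \tfrac{c(p)}{2}\, n^{-c_1'(p)\varepsilon}\Bigr] \leq C \exp\!\bigl(-c(p,\delta)\, n^{\alpha(p,\delta)}\bigr),$$
for some exponent $\alpha(p,\delta) > 0$, obtained by replacing the space parameter $n$ in~\eqref{eq:Main-Lemma-Bound-to-show} by the new effective box side $n^{1/4}$ (morally $\alpha = c_2(p,\delta)/4$).

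Finally, I would set $\xi(p,\delta) = \alpha(p,\delta)/2$ and take a union bound over the at most $C\exp(d n^{\xi})$ base points $z \in B(0,\exp(n^{\xi}))$ together with the two sublattices, giving a total failure probability bounded by $C\exp(d n^{\xi})\exp(-c\, n^{\alpha}) \leq C \exp(-c\, n^{\alpha}/2)$ for all $n$ large enough, which is summable in $n$. A Borel-Cantelli argument, together with the bound $\widehat{\mathbb{Q}}_p \leq \theta(p)^{-1}\mathbb{Q}_p$, then delivers~\eqref{eq:ManyTubes}. The main obstacle is verifying that the exponent $\alpha(p,\delta)$ produced by the Efron-Stein/Dario--Gloria machinery is genuinely of stretched-exponential quality with $\alpha > 0$ depending only on $p$ and $\delta$ and scales correctly under the replacement $n \mapsto n^{1/4}$; once this bookkeeping is done, the slack between $\alpha$ and the chosen $\xi = \alpha/2$ is precisely what closes the union bound over the much larger box $B(0,\exp(n^{\xi}))$.
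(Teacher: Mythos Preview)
Your proposal is correct and follows essentially the same route as the paper: a single-box stretched-exponential tail estimate, a union bound over $z \in B(0,\exp(n^{\xi}))$, and Borel--Cantelli. The only difference is in packaging: rather than rerunning the Efron--Stein argument at scale $n^{1/4}$, the paper directly quotes the tail bound $\mathbb{Q}_p[N_z > n_0] \leq C\exp(-c\,n_0^{c_4})$ from~\cite[(4.13)]{nitzschner2025absence} on the random scale $N_z$ above which the conclusion of~\cite[Lemma 4.1]{nitzschner2025absence} holds at $z$, sets $\xi = (c_4/8)\wedge 1$, and union-bounds the events $\{N_z > \tfrac{1}{100}n^{1/4}\}$; this is slightly more modular but substantively identical to what you propose.
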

\begin{proof}
Consider random variables for $z \in \mathbb{Z}^d$, defined by
\begin{equation}
\begin{split}
N_z(\mu)  = \inf\{ & n_0 \in \mathbb{N} \, : \, \text{$B_o(z,n)$ contains at least $c_3(p) n^{d-\delta}$ points with $x \in \mathcal{C}_\infty$}  \\
& \text{such that $T_{x,4\varepsilon \log(n)}$ is a good open tube for all $n \geq n_0$} \},
\end{split}
\end{equation}
where $c_3(p) = \frac{1}{2}\theta(p)$. From~\cite[(4.13)]{nitzschner2025absence}, one knows that
\begin{equation}
\mathbb{Q}_p[N_0 > n_0] \leq  C(p,\delta) \exp\left(-c(p,\delta)n_0^{c_4(p,\delta)} \right),
\end{equation}
and the random variables $(N_z)_{z \in \mathbb{Z}^d}$ are equal in law. We define
\begin{equation}
\label{eq:Exponent}
\xi(p,\delta) \stackrel{\mathrm{def}}{=} \frac{c_4(p,\delta)}{8} \wedge 1.
\end{equation}
It follows that \begin{equation}
\sum_{n = 1}^\infty \mathbb{Q}_p\left[ \bigcup_{z \in B(0,\exp(n^{\xi}))} \{N_z > \frac{1}{100} n^{\frac{1}{4}} \}\right] \leq \sum_{n = 1}^\infty C \exp(dn^{\xi}) \cdot \exp\left(-c n^{\frac{c_4}{4}} \right) < \infty,
\end{equation}
we infer (again by the Borel-Cantelli lemma) that for some $N_{\text{tube},o}(\mu) (< \infty\text{, $\mathbb{Q}_p$-a.s.})$, one has that $N_z \leq \frac{1}{100} n^{\frac{1}{4}}$ for every $z \in B(0,\exp(n^\xi))$ whenever $n \geq N_{\text{tube},o}$, which implies that every $B_o(z,k)$ contains at least $c_5(p)k^{d-\delta}$ points $x \in \mathcal{C}_\infty$ such that $T_{x,4\varepsilon \log(k)}$ is a good open tube whenever $k > \frac{1}{100} n^{\frac{1}{4}}$. In particular, every $B_o(z,n^{\frac{1}{4}})$ contains at least $c(p)n^{\frac{d-\delta}{4} }$ many points $x \in \mathcal{C}_\infty$ such that $T_{x,  \varepsilon \log(n)}$ is a good open tube. Since this argument can be repeated for $B_o(\cdot,\cdot)$ replaced by $B_e(\cdot,\cdot)$, we obtain $N_{\text{tube},e}(\mu) (< \infty\text{, $\mathbb{Q}_p$-a.s.})$, and so the claim follows for $n \geq \widetilde{N}_{\text{tube}}(\mu) \stackrel{\text{def}}{=} N_{\text{tube},o}(\mu) \vee N_{\text{tube},e}(\mu)$. 
\end{proof}
\section{Proof of Theorem~\ref{thm:MainTheorem}, Part (ii): $\overline{\beta}_c(\mathcal{C}_\infty) > 0$}
\label{s.ProofMainProp}

In this section, we provide the proof of part (ii) of the main Theorem~\ref{thm:MainTheorem}. Our strategy is to construct, for $n \in \mathbb{N}$ large enough, an event $G_n$ with sub-exponentially decaying probability for the underlying random walk on the cluster (see~\eqref{eq:G_n-event}) to obtain a lower bound on $\log W_{n,\mu}^\beta$ when the trajectories of the underlying walk are ``restricted'' to $G_n$. This is then combined with the concentration result~\eqref{eq:Concentration-result}, which rules out the possibility of very strong disorder, and thus proving the claim. We remark that this strategy is inspired by the one employed in~\cite[Section 6]{cosco2021directed}. There, the authors show that very strong disorder does not hold for small $\beta$ for the directed polymer defined with an upwardly biased random walk on certain supercritical Galton-Watson trees conditioned on non-extinction as a reference model. \medskip

We start our construction by considering a certain beneficial event to which we will restrict the trajectories. To that end, define for a given $\mu \in \{0 \in \mathcal{C}_\infty\}$, $\varepsilon > 0$, and $n \in \mathbb{N}$ the subset of $\mathcal{C}_\infty \cap B_o(0,n^{\frac{1}{4}})$ consisting of centers of good blocks, namely
\begin{equation}
\mathscr{Q}_{o,n} = \{x \in \mathcal{C}_\infty \cap B_o(0,n^{\frac{1}{4}}) \, : \, B(x,(\varepsilon \log(n))^{\frac{1}{d}}) \text{ is a good block} \}
\end{equation}
(note that the latter may be empty), and similarly $\mathscr{Q}_{e,n}$ as the corresponding subset of $\mathcal{C}_\infty \cap B_e(0,n^{\frac{1}{4}})$. We then define the event
\begin{equation}
\label{eq:H_n-def}
H_n = \{X_{\lfloor \sqrt{n} \rfloor} \in \mathscr{Q}_{o,n} \cup \mathscr{Q}_{e,n} \}, \qquad n \in \mathbb{N}.
\end{equation}
Note that the event $H_n$ implicitly depends on the realization $\mathcal{C}_\infty$ of the cluster, but we suppress this dependence in the notation.
\begin{lemma}
\label{lem:H_n-lower-bound}
Let $\delta > 0$. With $\varepsilon_0(p,\delta) > 0$ as in the statement of Lemma~\ref{thm:Many-good-boxes}, there exists a set $\Omega_2 \in \mathcal{A}$ with $\Omega_2 \subseteq \{0 \in \mathcal{C}_\infty\}$ of full $\widehat{\mathbb{Q}}_p$-measure such that for every $\mu \in \Omega_2$, there exists $N_0(\mu)<\infty$ such that for every $n \geq N_0(\mu)$, the following bound holds for $\varepsilon \in (0,4^{-\frac{1}{d}} \varepsilon_0(p,d))$:
\begin{equation}
\label{eq:H_n-lower-bound}
P_{0,\mu}[H_n] \geq \frac{c}{n^{\delta/4}}.
\end{equation}
\end{lemma}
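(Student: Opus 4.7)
The plan is to deduce~\eqref{eq:H_n-lower-bound} by combining Lemma~\ref{thm:Many-good-boxes}, applied to the rescaled box $B(0,n^{1/4})$, with the Gaussian heat-kernel lower bound~\eqref{eq:HKLB} on $\mathcal{C}_\infty$ from~\cite{barlow2004random} evaluated at the intermediate time $t := \lfloor \sqrt{n} \rfloor$. The choice $t \asymp \sqrt{n}$ is tailored so that the diffusive displacement matches the scale $n^{1/4}$ of the target box: for any $y \in B(0,n^{1/4})$ one has $|y|^2/t \leq C$, so every such $y$ is reached from the origin with probability at least of order $n^{-d/4}$, and it remains only to sum this contribution over a polynomially large set of good block centers.

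The setup would go as follows. I would take $\Omega_2$ to be the intersection of $\{0\in\mathcal{C}_\infty\}$, the Barlow full-measure set $\Omega_1$ from~\eqref{eq:R-def}, and the full-measure set under $\widehat{\mathbb{Q}}_p$ provided by Lemma~\ref{thm:Many-good-boxes}. Next, I would apply Lemma~\ref{thm:Many-good-boxes} with ambient radius $m := n^{1/4}$ and with a parameter $\varepsilon_L$ chosen comparable to $\varepsilon$ so that $\varepsilon_L \in (0,\varepsilon_0)$ and $(\varepsilon_L \log m)^{1/d} \geq (\varepsilon \log n)^{1/d}$, i.e.~the blocks produced by the lemma are at least as large as those appearing in the definitions of $\mathscr{Q}_{o,n}$ and $\mathscr{Q}_{e,n}$. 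This yields, once $n^{1/4} \geq N_{\mathrm{block}}(\mu,\delta)$, at least $c(p)\,n^{(d-\delta)/4}$ points of each parity class inside the corresponding set $\mathscr{Q}_{o,n}$ respectively $\mathscr{Q}_{e,n}$.

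Then, for $\mu \in \Omega_2$ and $n$ large enough that $t \geq R_0(\mu) \vee d\,n^{1/4}$, the bound~\eqref{eq:HKLB} implies for every $y \in \mathcal{C}_\infty \cap B(0,n^{1/4})$ that
\begin{equation*}
P_{0,\mu}[X_t = y] + P_{0,\mu}[X_{t+1} = y] \geq \frac{c}{t^{d/2}} \exp\!\Big(-\frac{c'|y|^2}{t}\Big) \geq \frac{c''}{n^{d/4}}.
\end{equation*}
Since the simple random walk on $\mathcal{C}_\infty$ is bipartite and $0 \in \mathbb{Z}^d_e$, the walk $X_t$ lies in $\mathbb{Z}^d_e$ (resp.~$\mathbb{Z}^d_o$) if $t$ is even (resp.~odd), so for each fixed $y$ exactly one of the two probabilities on the left vanishes. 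If $t$ is even, summing the displayed bound over $y \in \mathscr{Q}_{e,n} \subseteq \mathbb{Z}^d_e$ produces
\begin{equation*}
P_{0,\mu}[X_t \in \mathscr{Q}_{e,n}] \geq \frac{c''}{n^{d/4}} \cdot c(p)\,n^{(d-\delta)/4} = c\,n^{-\delta/4},
\end{equation*}
and the case of odd $t$ is handled analogously using $\mathscr{Q}_{o,n}$. In either case we obtain $P_{0,\mu}[H_n] \geq c\,n^{-\delta/4}$, as claimed; $N_0(\mu)$ is chosen large enough that $R_0(\mu) \leq \lfloor\sqrt n\rfloor$, $d\,n^{1/4} \leq \lfloor\sqrt n\rfloor$, and $n^{1/4} \geq N_{\mathrm{block}}(\mu,\delta)$, all of which hold for all sufficiently large $n$ thanks to~\eqref{eq:R-def} and Lemma~\ref{thm:Many-good-boxes}.

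The main subtlety is not any individual estimate but the clean interface between Lemma~\ref{thm:Many-good-boxes} and the heat-kernel bound~\eqref{eq:HKLB}: because~\eqref{eq:HKLB} is formulated as a sum over two consecutive times to absorb the bipartite oscillation, it does not \emph{a priori} give a one-time lower bound at $t = \lfloor\sqrt{n}\rfloor$. The design of $H_n$ as the union over both parity sets $\mathscr{Q}_{o,n}$ and $\mathscr{Q}_{e,n}$ is precisely what sidesteps this obstacle, since whichever parity $t$ happens to have, the relevant class $\mathscr{Q}_{\ast,n}$ already contains $c(p)\,n^{(d-\delta)/4}$ good block centers at which the $X_{t+1}$-term vanishes, and this cardinality comfortably compensates the $n^{-d/4}$ heat-kernel cost.
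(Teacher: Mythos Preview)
Your proposal is correct and follows essentially the same approach as the paper: apply Lemma~\ref{thm:Many-good-boxes} at scale $n^{1/4}$ to obtain $c(p)\,n^{(d-\delta)/4}$ good block centers of each parity, then use the Gaussian heat-kernel lower bound~\eqref{eq:HKLB} at time $\lfloor\sqrt{n}\rfloor$ and sum. You are in fact more explicit than the paper on two points---the rescaling of $\varepsilon$ needed so that the blocks produced by the lemma at radius $n^{1/4}$ are at least as large as those defining $\mathscr{Q}_{\ast,n}$, and the bipartite parity argument (which the paper handles implicitly by taking $|\mathscr{Q}_{o,n}| \wedge |\mathscr{Q}_{e,n}|$)---but the structure of the argument is the same.
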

\begin{proof}
Let $\delta > 0$. By Lemma~\ref{thm:Many-good-boxes}, we see that for every $n \geq N^4_{\mathrm{block}}(\mu,\delta)$ as in the statement of that theorem, $|\mathscr{Q}_{o,n}| \geq c(p) n^{\frac{d-\delta}{4}}$ and $|\mathscr{Q}_{e,n}| \geq c(p) n^{\frac{d-\delta}{4}}$, for all $\mu$ in a subset of $\{0 \in \mathcal{C}_\infty\}$ of full $\widehat{\mathbb{Q}}_p$-measure, denoted by $\Omega^\star$. Then, by~\eqref{eq:R-def} we see that for $\mu \in \Omega_1 \cap \Omega^\star (\subseteq \{0 \in \mathcal{C}_\infty\})$, $R_0(\mu) < \infty$ and by the bound~\eqref{eq:HKLB} on the transition probability, one has
\begin{equation}
\begin{split}
P_{0,\mu}[H_n] & \geq  \sum_{x \in \mathscr{Q}_{o,n} \cup \mathscr{Q}_{e,n}} P_{0,\mu}[X_{\lfloor \sqrt{n} \rfloor} = x] \\
& \geq (|\mathscr{Q}_{o,n}| \wedge |\mathscr{Q}_{e,n}|) \cdot \frac{c}{n^{d/4}} \exp\left(- \sup_{x \in B(0,n^{\frac{1}{4}})} \frac{|x|^2}{\sqrt{n}} \right) \\
& \geq \frac{c}{n^{\delta/4}},
\end{split}
\end{equation}
provided that $n \geq N_0(\mu) \stackrel{\mathrm{def}}{=} N^4_{\mathrm{block}}(\mu,\delta) \vee R_0^2(\mu)$, which completes the proof.
\end{proof}
We will combine $H_n$ with the confinement estimate from Lemma~\ref{lem:Confinement} as follows. We define for $r \geq 0$ the time
\begin{equation}
\tau_r = \inf\{k \in \mathbb{N}_0 \, : \, |X_k - X_0|_\infty \geq r \}, 
\end{equation} 
which corresponds to the first time that a random walk (on $\mathcal{C}_\infty$) moves away from its starting point to a sup-norm distance $r$. With this, we define for a given $\mu \in \{0 \in \mathcal{C}_\infty\}$ and $n\in \mathbb{N}$ the event
\begin{equation}
\label{eq:G_n-event}
\begin{split}
G_n & \stackrel{\mathrm{def}}{=} H_n \cap \{\tau_{(\varepsilon \log(n))^{\frac{1}{d}}} \circ \theta_{\lfloor \sqrt{n} \rfloor} \geq n - \lfloor \sqrt{n}\rfloor \} \\
& = \{\text{$X$ is at a center of a good block $B(x,(\varepsilon \log(n))^{\frac{1}{d}})$ at time $\lfloor \sqrt{n} \rfloor$,} \\
& \ \ \text{ and remains in this block until time $n$}\}.
\end{split}
\end{equation}
We will use $G_n$ as a set of good trajectories to which we restrict the normalized partition function. More precisely, we consider for $\mu \in \{0 \in \mathcal{C}_\infty\}$ and $n \in \mathbb{N}$ the random variable
\begin{equation}
\label{eq:Definition-V}
V_{n,\mu} = E_{0,\mu}\left[\exp\left\{\beta\sum_{i = 1}^n \omega(i,X_i) - n\lambda(\beta) \right\} \mathbbm{1}_{G_n}(X) \right].
\end{equation}
We note that by Fubini's theorem and the definition of $\lambda(\beta)$ (see~\eqref{eq:Definitions-of-omega}), it is clear that
\begin{equation}
\label{eq:First-moment}
\mathbb{E}[V_{n,\mu}] = P_{0,\mu}[G_n], \qquad n \in \mathbb{N}.
\end{equation}
We now derive a lower bound on the probability on the right-hand side of the previous display.
\begin{lemma}
Let $\delta > 0$, $\varepsilon \in (0,4^{-\frac{1}{d}} \varepsilon_0(p,d))$ with $\varepsilon_0(p,\delta)$ as in the statement of Lemma~\ref{thm:Many-good-boxes}, and let $\Omega_2 \in \mathcal{A}$, $\Omega_2 \subseteq \{0 \in \mathcal{C}_\infty\}$ be a set of full $\widehat{\mathbb{Q}}_p$-measure and $N_0(\mu) <\infty$ for $\mu \in \Omega_2$ as in Lemma~\ref{lem:H_n-lower-bound}. For every $\mu \in \Omega_2$ and $n \geq N_0(\mu)$, one has
\begin{equation}
\label{eq:G_n-lower-bound}
P_{0,\mu}[G_n] \geq C\exp\left(-c(\varepsilon) \frac{n}{(\log (n))^{\frac{2}{d}}} \right).
\end{equation}
\end{lemma}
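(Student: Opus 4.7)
The plan is to decompose $G_n$ via the strong Markov property of the random walk on $\mathcal{C}_\infty$ applied at the deterministic time $\lfloor \sqrt n \rfloor$. Writing
\begin{equation}
P_{0,\mu}[G_n] = \sum_{x \in \mathscr{Q}_{o,n} \cup \mathscr{Q}_{e,n}} P_{0,\mu}[X_{\lfloor \sqrt n \rfloor} = x] \cdot P_{x,\mu}\Big[\tau_{(\varepsilon \log n)^{\frac{1}{d}}} \geq n - \lfloor \sqrt n \rfloor\Big],
\end{equation}
I observe that the sum of the first factors over $x \in \mathscr{Q}_{o,n} \cup \mathscr{Q}_{e,n}$ is exactly $P_{0,\mu}[H_n]$, which by Lemma~\ref{lem:H_n-lower-bound} is bounded below by $c\, n^{-\delta/4}$ for $n \geq N_0(\mu)$ and $\mu \in \Omega_2$.

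Next I would estimate the inner probability uniformly in $x$, exploiting the defining feature of a good block: since $B(x,(\varepsilon \log n)^{\frac{1}{d}})$ is a good block, every edge $e \in E(B(x,(\varepsilon \log n)^{\frac{1}{d}}))$ has $\mu_e = 1$, so each vertex $v$ in this block has all $2d$ incident nearest-neighbor edges open in $\mu$. Consequently the jump probabilities $q_\mu(v,\cdot)$ coincide with those of the simple random walk on $\mathbb{Z}^d$ for every such $v$; restricted to trajectories that have not yet left $B(x,(\varepsilon \log n)^{\frac{1}{d}})$, the walk under $P_{x,\mu}$ has the same finite-dimensional distributions as the walk under $P_x^{\mathbb{Z}^d}$. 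Using translation invariance of $P_x^{\mathbb{Z}^d}$ and Lemma~\ref{lem:Confinement}, applied with $R = \lfloor (\varepsilon \log n)^{\frac{1}{d}} \rfloor \geq 10$ (valid for all $n$ large enough in terms of $\varepsilon$) and time horizon $n - \lfloor \sqrt n \rfloor \leq n$, I obtain
\begin{equation}
P_{x,\mu}\Big[\tau_{(\varepsilon \log n)^{\frac{1}{d}}} \geq n - \lfloor \sqrt n \rfloor\Big] = P_0^{\mathbb{Z}^d}\Big[T_{B(0,\lfloor (\varepsilon \log n)^{\frac{1}{d}} \rfloor)} \geq n - \lfloor \sqrt n \rfloor\Big] \geq \exp\left(-c\,\frac{n}{(\varepsilon \log n)^{\frac{2}{d}}}\right),
\end{equation}
uniformly over $x \in \mathscr{Q}_{o,n} \cup \mathscr{Q}_{e,n}$.

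Combining the two estimates then yields
\begin{equation}
P_{0,\mu}[G_n] \geq \frac{c}{n^{\delta/4}} \exp\left(-c\,\frac{n}{(\varepsilon \log n)^{\frac{2}{d}}}\right) \geq C \exp\left(-c(\varepsilon)\,\frac{n}{(\log n)^{\frac{2}{d}}}\right)
\end{equation}
for $n$ sufficiently large, where in the last step the polynomial prefactor $n^{-\delta/4}$ is absorbed into the dominant exponential at the cost of enlarging the constant $c(\varepsilon)$ slightly (using that $\log n \ll n/(\log n)^{2/d}$ for large $n$). I do not anticipate any real obstacle in this proof: the only subtle point is the identification of the walk inside a good block with the simple random walk on $\mathbb{Z}^d$, which is immediate from the definition of a good block since $E(B(x,L))$ contains every edge incident to $B(x,L)$.
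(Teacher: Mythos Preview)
Your proposal is correct and follows essentially the same approach as the paper: apply the (simple) Markov property at time $\lfloor \sqrt n\rfloor$, use Lemma~\ref{lem:H_n-lower-bound} for the $H_n$ part, identify the walk inside a good block with the simple random walk on $\mathbb{Z}^d$, and then invoke the confinement estimate of Lemma~\ref{lem:Confinement}. The only cosmetic difference is that the paper writes the Markov decomposition as an expectation with an indicator of $H_n$ rather than as a sum over $x\in\mathscr{Q}_{o,n}\cup\mathscr{Q}_{e,n}$, and then takes an infimum over the starting point, which is exactly your uniform bound on the inner probability.
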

\begin{proof}
Let $\mu \in \Omega_2$. By the simple Markov property of $X$ at time $\lfloor \sqrt{n} \rfloor$ we see that
\begin{equation}
\begin{split}
P_{0,\mu}[G_n] = E_{0,\mu}\left[\mathbbm{1}_{H_n} P_{X_{\lfloor \sqrt{n} \rfloor},\mu}\Big[\tau_{(\varepsilon (\log(n))^{\frac{1}{d}}} \geq n - \lfloor \sqrt{n }\rfloor \Big]  \right],
\end{split}
\end{equation}
and we note that on $H_n$, by definition (see~\eqref{eq:H_n-def}) one has $X_{\lfloor \sqrt{n} \rfloor} \in \mathscr{Q}_{o,n} \cup \mathscr{Q}_{e,n}$. For $z \in \mathscr{Q}_{o,n} \cup \mathscr{Q}_{e,n}$ we see that the law of  $X$ under $P_{z,\mu}$ coincides with that under $P^{\mathbb{Z}^d}_z$ until the exit time of $B(z,(\varepsilon \log(n))^{\frac{1}{d}})$, and therefore we have (for $n \geq N_0(\mu)$)
\begin{equation}
\begin{split}
P_{0,\mu}[G_n] & \geq P_{0,\mu}[H_n] \inf_{z \in \mathscr{Q}_{o,n} \cup \mathscr{Q}_{e,n}} P_{z,\mu}[\tau_{(\varepsilon \log(n))^{\frac{1}{d}}} \geq n - \lfloor \sqrt{n} \rfloor] \\
& \stackrel{\eqref{eq:H_n-lower-bound}}{\geq} \frac{c}{n^{\delta/4}} P_0^{\mathbb{Z}^d}[T_{B(0,(\varepsilon \log(n))^{\frac{1}{d}}} \geq n - \lfloor \sqrt{n} \rfloor] \\
& \stackrel{\eqref{eq:Confinement-bound}}{\geq} \frac{c}{n^{\delta/4}} \exp\left(- \frac{n - \lfloor \sqrt{n} \rfloor}{(\varepsilon \log(n))^{\frac{2}{d}}} \right).
\end{split}
\end{equation}
Rearranging terms, we obtain the claim.
\end{proof}
We will use a second moment argument to obtain a lower bound on the probability that $V_{n,\mu}$ is at least $\frac{1}{2}\mathbb{E}[V_{n,\mu}]$ (recall the definition of $V_{n,\mu}$ from~\eqref{eq:Definition-V}). To that end, we use a coupling argument, similarly as in~\cite[Theorem 6.1]{cosco2021directed}. Specifically, we prove the following.
\begin{lemma}
Let $\delta > 0$, $ \varepsilon \in (0,4^{-\frac{1}{d}} \varepsilon_0(p,d))$ with $\varepsilon_0(p,\delta)$ as in the statement of Lemma~\ref{thm:Many-good-boxes},  and let $\Omega_2 \in \mathcal{A}$, $\Omega_2 \subseteq \{0 \in \mathcal{C}_\infty\}$ be a set of full $\widehat{\mathbb{Q}}_p$-measure and $N_0(\mu) <\infty$ for $\mu \in \Omega_2$ as in Lemma~\ref{lem:H_n-lower-bound}. For all $\beta \in (0,\beta_{L^2}(\mathbb{Z}^d))$ and $\mu \in  \Omega_2$, we have for all $n \geq N_0(\mu)$ that
\begin{equation}
\label{eq:Second-moment-upper}
\mathbb{E}[(V_{n,\mu})^2] \leq Ce^{(\lambda(2\beta) - 2\lambda(\beta))\sqrt{n}}.
\end{equation}
\end{lemma}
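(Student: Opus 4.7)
The plan is to expand $\E[(V_{n,\mu})^2]$ as a double random-walk expectation, identify the exponential of the collision count, split time at $\lfloor\sqrt{n}\rfloor$, and reduce to the classical $L^2$-criterion for two independent simple random walks on $\Zd$. By Fubini's theorem and the i.i.d.~structure of $\omega$, a site-by-site computation of the moment generating function of the environment yields
\[
\E[(V_{n,\mu})^2] = E_{0,\mu}^{\otimes 2}\!\left[\exp\!\left\{\alpha \sum_{i=1}^n \mathbbm{1}_{\{X_i^{(1)} = X_i^{(2)}\}}\right\} \mathbbm{1}_{G_n}(X^{(1)})\, \mathbbm{1}_{G_n}(X^{(2)})\right],
\]
with $\alpha := \lambda(2\beta) - 2\lambda(\beta) > 0$ and $X^{(1)}, X^{(2)}$ two independent copies of the walk on $\mathcal{C}_\infty$ under $P_{0,\mu}^{\otimes 2}$. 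The first $\lfloor\sqrt{n}\rfloor$ collision indicators contribute trivially at most $e^{\alpha\sqrt{n}}$, pulling out the desired prefactor; it remains to control the collisions after time $\lfloor\sqrt{n}\rfloor$.

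The core step is a coupling with $\Zd$-SRWs. On $G_n$, both walks sit at centers $z^{(k)} := X_{\lfloor\sqrt{n}\rfloor}^{(k)} \in \mathscr{Q}_{o,n} \cup \mathscr{Q}_{e,n}$ of good blocks $B(z^{(k)}, (\varepsilon \log n)^{1/d})$ and remain confined to them until time $n$. Since every vertex of a good block has all $2d$ incident edges open (by the definitions of good block and of $E(\cdot)$), the jump kernel $q_\mu$ agrees with the $\Zd$-SRW kernel at every vertex either walk may visit before leaving its good block. Therefore, on the confinement event, the joint law of $(X^{(1)}, X^{(2)})$ after time $\lfloor\sqrt{n}\rfloor$ coincides with that of two independent $\Zd$-SRWs started from $z^{(1)}, z^{(2)}$, stopped at the exit times of their respective good blocks. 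Since the integrand is non-negative, dropping the confinement indicator and extending the collision sum to infinity can only enlarge the expectation, and applying the simple Markov property at time $\lfloor\sqrt{n}\rfloor$ gives
\[
\E[(V_{n,\mu})^2] \le e^{\alpha\sqrt{n}}\, E_{0,\mu}^{\otimes 2}\!\left[\mathbbm{1}_{H_n}(X^{(1)})\, \mathbbm{1}_{H_n}(X^{(2)})\, \Phi(z^{(1)}, z^{(2)})\right],
\]
where $\Phi(a,b) := E^{\Zd,\otimes 2}_{a,b}[\exp\{\alpha N\}]$ with $N := \sum_{j \ge 1}\mathbbm{1}_{\{X_j^{(1)} = X_j^{(2)}\}}$ under two independent $\Zd$-SRWs started at $a, b$.

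It then remains to show $\Phi(a,b) \le C_\beta$ uniformly for starting points of matching parity, using the assumption $\beta \in (0, \beta_{L^2}(\Zd))$. A first-passage decomposition at the first meeting time $T$ combined with the strong Markov property yields $E^{\Zd,\otimes 2}_{0,0}[e^{\alpha N}] = (1-q)/(1-e^\alpha q)$ with $q := (P^{\Zd}_0)^{\otimes 2}[T<\infty]$; via the classical renewal identity $1/(1-q) = 1 + \sum_{k \ge 1}(P^{\Zd}_0)^{\otimes 2}[X_k^{(1)} = X_k^{(2)}]$, the convergence condition $e^\alpha q < 1$ is exactly the defining inequality of $\beta_{L^2}(\Zd)$. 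For arbitrary $(a,b)$ of matching parity, conditioning on $\{T < \infty\}$ gives $\Phi(a,b) \le e^\alpha E^{\Zd,\otimes 2}_{0,0}[e^{\alpha N}] =: C_\beta$ independently of $a, b$. Plugging back and using $P_{0,\mu}[H_n] \le 1$ yields $\E[(V_{n,\mu})^2] \le C_\beta\, e^{\alpha\sqrt{n}}$, as claimed. The principal obstacle is the clean execution of the coupling: verifying that the good-block structure forces $q_\mu$ to match the $\Zd$-SRW kernel at every reachable vertex before confinement fails, and invoking the uniform bound on $\Phi$ while respecting the parity constraint on $z^{(1)}, z^{(2)}$ inherited from the common starting point of both walks at the origin.
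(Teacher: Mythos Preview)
Your proof is correct and follows essentially the same approach as the paper: expand the second moment as a two-walk collision functional, extract the trivial $e^{\alpha\sqrt{n}}$ from the first $\lfloor\sqrt{n}\rfloor$ steps, couple the confined walks inside the good blocks with $\mathbb{Z}^d$-SRWs, and invoke the $L^2$-criterion via a first-collision (strong Markov) reduction to $(P_0^{\mathbb{Z}^d})^{\otimes 2}$. The only cosmetic difference is that the paper separates the cases where the two good blocks are disjoint (trivially no collisions) versus overlapping, whereas you absorb both into the uniform bound $\Phi(a,b)\le e^{\alpha}E_{0,0}^{\mathbb{Z}^d,\otimes 2}[e^{\alpha N}]$; this is a harmless streamlining.
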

\begin{proof}
Let $X^{(1)}$ and $X^{(2)}$ two independent walks on $\mathcal{C}_\infty$ under $(P_{0,\mu})^{\otimes 2}$, starting from the origin. To simplify notation, we define for $\beta \geq 0$ the expression
\begin{equation}
\lambda_2(\beta) = \lambda(2\beta) - 2\lambda(\beta).
\end{equation}
We see that
\begin{equation}
\label{eq:Second-moment-calc}
\begin{split}
\mathbb{E}[(V_{n,\mu})^2] & = (E_{0,\mu})^{\otimes 2}\left[\exp\left\{\lambda_2(\beta)\sum_{k = 1}^n \mathbbm{1}_{\{X^{(1)}_k = X^{(2)}_k \}}\right\} \mathbbm{1}_{G_n(X^{(1)})} \cdot \mathbbm{1}_{G_n(X^{(2)})} \right] \\
& \leq e^{\lambda_2(\beta)\sqrt{n}} (E_{0,\mu})^{\otimes 2}\left[\exp\left\{\lambda_2(\beta)\sum_{k = \lfloor \sqrt{n}\rfloor+1}^{n} \mathbbm{1}_{\{X^{(1)}_k = X^{(2)}_k \}}\right\} \mathbbm{1}_{G_n(X^{(1)})} \cdot \mathbbm{1}_{G_n(X^{(2)})} \right].
\end{split}
\end{equation}
We apply the simple Markov property for both walks at time $\lfloor \sqrt{n} \rfloor$, and distinguish two cases:
\begin{itemize}
\item[(i)] Case I: $|X^{(1)}_{\lfloor \sqrt{n} \rfloor} - X^{(2)}_{\lfloor \sqrt{n} \rfloor}|_\infty > 2 (\varepsilon \log(n))^{\frac{1}{d}}$. On the event $G_n(X^{(1)}) \cap G_n(X^{(2)})$, the two walks are then confined in different blocks $B(X^{(1)}_{\lfloor \sqrt{n} \rfloor}, (\varepsilon \log(n))^{\frac{1}{d}})$ and $B(X^{(2)}_{\lfloor \sqrt{n} \rfloor}, (\varepsilon \log(n))^{\frac{1}{d}})$ and do not meet until time $n$.
\item[(ii)] Case II: $X^{(1)}_{\lfloor \sqrt{n} \rfloor} = z$, $X^{(2)}_{\lfloor \sqrt{n} \rfloor} = z'$ with $z,z' \in \mathscr{Q}_{o,n} \cup \mathscr{Q}_{e,n}$ and $|z-z'|_\infty \leq (\varepsilon \log(n))^{\frac{1}{d}}$. We define the random variable
\begin{equation}
Z = \exp\left(\lambda_2(\beta)\sum_{k = 1}^{n - \lfloor \sqrt{n}\rfloor} \mathbbm{1}_{\{X^{(1)}_k = X^{(2)}_k \}} \right)\mathbbm{1}_{\left\{\tau_{(\varepsilon \log(n))^{\frac{1}{d} }}(X^{(1)}) \geq n - \lfloor \sqrt{n} \rfloor \right\}}\mathbbm{1}_{\left\{\tau_{(\varepsilon \log(n))^{\frac{1}{d} }}(X^{(2)}) \geq n - \lfloor \sqrt{n} \rfloor \right\}}.
\end{equation}
In that case note that 
\begin{equation}
\label{eq:Law-domination}
\begin{minipage}{0.8\linewidth}
the law of the random variable $
Z \text{ under $P_{z,\mu} \otimes P_{z',\mu}$}
$ coincides with the law of $Z$ under $P_z^{\mathbb{Z}^d} \otimes P_{z'}^{\mathbb{Z}^d} $,
\end{minipage}
\end{equation}
which follows by a direct inspection of the (finitely many) possible realizations of positive probability of $(X^{(1)}_k,X_k^{(2)})_{k = 1,...,n - \lfloor \sqrt{n} \rfloor}$.
Moreover, we see that the law of $Z$ under $P_z^{\mathbb{Z}^d} \otimes P_{z'}^{\mathbb{Z}^d} $ is stochastically dominated by the law of 
\begin{equation}
Z' = \exp\left(\lambda_2(\beta)\sum_{k = 1}^{\infty} \mathbbm{1}_{\{X^{(1)}_k = X^{(2)}_k \}} \right)
\end{equation}
under $(P_0^{\mathbb{Z}^d})^{\otimes 2}$ (using the strong Markov property at the first collision time and translation invariance). Upon combining this observation with~\eqref{eq:Law-domination}, we see that
\begin{equation}
(E_{z,\mu} \otimes E_{z,\mu'}) [Z] \leq (E_0^{\mathbb{Z}^d})^{\otimes 2}\left[\exp\left\{\lambda_2(\beta) \sum_{k = 1}^\infty \mathbbm{1}_{\{X^{(1)}_k = X^{(2)}_k \}} \right\} \right] \stackrel{\beta < \beta_{L^2}(\mathbb{Z}^d)}{=} C(\beta) < \infty.
\end{equation} 
\end{itemize}
Combining both cases and inserting into~\eqref{eq:Second-moment-calc}, we see that 
\begin{equation}
\mathbb{E}[(V_{n,\mu})^2] \leq e^{\lambda_2(\beta)\sqrt{n}} (1 \vee C(\beta)),
\end{equation}
and the claim follows.
\end{proof}
This brings us in a position to conclude the proof of Theorem~\ref{thm:MainTheorem}.
\begin{proof}[Proof of Theorem~\ref{thm:MainTheorem}, part (ii)]
We consider $\delta > 0$, $\varepsilon_0(p,\delta) > 0$, $ \varepsilon \in (0,4^{-\frac{1}{d}} \varepsilon_0(p,d))$, as well as  $\Omega_2 \in \mathcal{A}$, $\Omega_2 \subseteq \{0 \in \mathcal{C}_\infty\}$  and $N_0(\mu) <\infty$ from Lemma~\ref{lem:H_n-lower-bound}. Upon using the Paley-Zygmund inequality, we see that for $\mu \in \Omega_2$ and $n \geq N_0(\mu) \vee c(\varepsilon,\beta)$, one has
\begin{equation}
\label{eq:After-PZ}
\begin{split}
\mathbb{P}\left[V_{n,\mu} \geq \frac{1}{2} \mathbb{E}[V_{n,\mu}] \right] & \geq \frac{1}{4} \frac{\mathbb{E}[V_{n,\mu}]^2}{\mathbb{E}[(V_{n,\mu})^2]} \\
&\hspace{-0.92cm} \stackrel{\eqref{eq:First-moment},\eqref{eq:G_n-lower-bound},\eqref{eq:Second-moment-upper}}{ \geq} \frac{1}{4} C(\beta)\exp\left(-2c(\varepsilon) \frac{n}{(\log(n))^{\frac{2}{d}}} - \lambda_2(\beta)\sqrt{n}\right) \\
& \geq C'(\beta) \exp\left(-c'(\varepsilon) \frac{n}{(\log(n))^{\frac{2}{d}}}\right).
\end{split}
\end{equation}
Moreover, we have $W^\beta_{n,\mu} \geq V_{n,\mu}$ (recall the definitions~\eqref{eq:Normalized-Partition-function} of $W^\beta_{n,\mu}$ and~\eqref{eq:Definition-V} of $V_{n,\mu}$). We therefore see (using again~\eqref{eq:First-moment} and~\eqref{eq:G_n-lower-bound}) that
\begin{equation}
\label{eq:logW_n-lower}
\mathbb{P}\left[\log W^\beta_{n,\mu} \geq - 2c(\varepsilon)\frac{n}{(\log(n))^{\frac{2}{d}}}  \right] \stackrel{\eqref{eq:After-PZ}} \geq C'(\beta) \exp\left(-c'(\varepsilon)\frac{n}{(\log(n))^{\frac{2}{d}}} \right),
\end{equation}
for every $n \geq N_0(\mu) \vee c(\varepsilon,\beta)$ and $\beta \in (0,\beta_{L^2}(\mathbb{Z}^d))$. As we now see, this is a contradiction to the assumption that very strong disorder holds at $\beta$. Indeed, if very strong disorder holds at $\beta$, we must have
\begin{equation}
\mathbb{E}[\log W^\beta_{n,\mu}] \leq - \eta n
\end{equation}
for $n$ large enough for some fixed $\eta \in (0,1)$, and by~\eqref{eq:Concentration-result} we see
\begin{equation}
\label{eq:logW_n-upper}
\mathbb{P}\left[\log W^\beta_{n,\mu} \geq - \frac{\eta}{2}n \right] \leq e^{-C'(\beta)n\varepsilon^2}.
\end{equation}
We see that for large enough $n \geq N_0(\mu) \vee c(\varepsilon,\beta,\eta)$, there is a contradiction between~\eqref{eq:logW_n-upper} and~\eqref{eq:logW_n-lower}, so very strong disorder cannot hold.
\end{proof}

\section{A bound on the decay of $W_{n,\mu}^\beta$}
\label{sec:Decay}

In this section, we prove Theorem~\ref{thm:Decay-rate}. We introduce for $n \in \mathbb{N}$ the abbreviations
\begin{equation}
\label{eq:r_ndef}
r_n = \left\lfloor\exp(n^\xi)\right\rfloor, \qquad \kappa = \frac{1}{\xi} \ \text{ (recall~\eqref{eq:Exponent})}.
\end{equation} 
We first recall another straightforward modification on a calculation involving fractional moments from~\cite{nitzschner2025absence} that will be useful. 
\begin{lemma}
\label{lem:Main-lemma-Sec5}
Let $\mu \in \{0\in \mathcal{C}_\infty\}$, and recall for $x\in \mathcal{C}_\infty$ the definition of the normalized partition function $W_{n,\mu}^\beta(y)$ starting from $y \in \mathcal{C}_\infty$, see~\eqref{eq:Normalized-Partition-Function-other-starting-point}.
For $\widehat{\mathbb{Q}}_p$-a.s.~$\mu \in \{0 \in \mathcal{C}_\infty\}$, there exists $N'(\mu) <\infty$ such that for all $n \geq N'(\mu)$, one has
\begin{equation}
\label{eq:Unif-over-large-box}
\sup_{y \in \mathcal{C}_\infty \cap B(0,r_n)} \mathbb{E}\left[\sqrt{W_{n,\mu}^\beta(y)} \right] \leq C\exp\left(-c(p)(\log(n))^{\frac{5}{4}}\right).
\end{equation}
\end{lemma}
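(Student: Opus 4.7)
My plan is to directly adapt the fractional moment argument from~\cite{nitzschner2025absence}, modifying it only to obtain uniformity of the estimate over the larger range $y \in \mathcal{C}_\infty \cap B(0, r_n)$ in place of $B(0, n)$. The key point is that upon replacing the original tube abundance lemma of~\cite{nitzschner2025absence} with our stronger Lemma~\ref{lem:ManyTubes}, the same computation will yield a bound that is uniform over $B(0, r_n) = B(0, \lfloor \exp(n^\xi) \rfloor)$, since Lemma~\ref{lem:ManyTubes} provides good open tubes in $B(z, n^{1/4})$ uniformly over $z \in B(0, \exp(n^\xi))$.

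Concretely, I would fix $\mu$ in the full measure set of Lemma~\ref{lem:ManyTubes} and $n$ sufficiently large. For any starting point $y \in \mathcal{C}_\infty \cap B(0, r_n)$, Lemma~\ref{lem:ManyTubes} guarantees at least $c(p) n^{(d-\delta)/4}$ good open tubes $T_{x, \varepsilon \log(n)}$ of length $\varepsilon \log n$ with base points $x$ in each of $B_o(y, n^{1/4}) \cap \mathcal{C}_\infty$ and $B_e(y, n^{1/4}) \cap \mathcal{C}_\infty$. The fractional moment computation then proceeds by decomposing trajectories of the walk according to the sequence of good tubes they traverse, using the subadditivity inequality $(\sum a_i)^{1/2} \leq \sum a_i^{1/2}$ to split $\sqrt{W_{n,\mu}^\beta(y)}$ into path-indexed contributions. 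Each good tube traversal contributes a multiplicative fractional moment reduction of order $\exp(-c(\beta)\log n)$, arising from the $\lfloor \varepsilon \log n \rfloor$ constrained steps inside the tube weighted by the single-step factor $\exp(-\alpha(\beta))$ with $\alpha(\beta) = \lambda(\beta)/2 - \lambda(\beta/2) > 0$. Summing these gains over the tube visits made by a typical trajectory yields the target bound $C \exp(-c(p)(\log n)^{5/4})$, as in~\cite{nitzschner2025absence}. Since Lemma~\ref{lem:ManyTubes} holds uniformly for every $z \in B(0, \exp(n^\xi))$, the estimate transfers uniformly to every $y \in \mathcal{C}_\infty \cap B(0, r_n)$, giving~\eqref{eq:Unif-over-large-box}.

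I do not expect any new technical obstacle beyond what is already contained in~\cite{nitzschner2025absence}; the essential preparatory step, namely the extension of the tube abundance result to the larger box $B(0, \exp(n^\xi))$, has already been established in Lemma~\ref{lem:ManyTubes} via a Borel--Cantelli argument on the stretched exponential tail of~\cite[(4.13)]{nitzschner2025absence}. The only point that will require some care is to verify that every $y$-dependent estimate entering the fractional moment computation of~\cite{nitzschner2025absence} depends on $y$ only through the local structure of $\mathcal{C}_\infty$ near $y$ (captured by the abundance of good tubes in a ball of radius $n^{1/4}$ around $y$), so that the $y$-uniformity is fully inherited from the $z$-uniformity of Lemma~\ref{lem:ManyTubes}. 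The exponent $\xi$ in~\eqref{eq:Exponent} is tuned precisely so that the Borel--Cantelli sum remains finite over the exponentially large box, while also leaving enough room for the iterative argument in Section~\ref{sec:Decay} that will promote~\eqref{eq:Unif-over-large-box} to the decay bound~\eqref{eq:Bound-main-result}.
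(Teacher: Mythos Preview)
Your plan is correct and essentially matches the paper's proof, which likewise imports the fractional-moment computation from~\cite{nitzschner2025absence} and replaces its tube-abundance input by Lemma~\ref{lem:ManyTubes} to obtain uniformity over $B(0,r_n)$. One minor point of phrasing: the paper isolates a single event $\mathcal{A}_n$ (the walk spends at least $\lfloor \varepsilon \log n\rfloor^3$ consecutive steps in one good tube) and bounds $\mathbb{E}[\sqrt{W_{n,\mu}^\beta(y)}]$ by $C\exp(-c(p)(\log n)^{5/4}) + \sqrt{P_{y,\mu}[\mathcal{A}_n^c]}$ via~\cite[(3.10),(3.20)]{nitzschner2025absence}, with only the second term requiring the uniform tube estimate---so the mechanism is a single long sojourn rather than ``summing gains over multiple tube visits'' as you describe, but this is a difference in how the cited argument is paraphrased rather than in the strategy.
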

\begin{proof}
By careful inspection of~\cite[(3.10), (3.20)]{nitzschner2025absence} we see that for any $y\in \mathcal{C}_\infty \cap B(0,r_n)$ (and $n$ large enough),
\begin{equation}
\label{eq:Two-summands}
 \mathbb{E}\left[\sqrt{W_{n,\mu}^\beta(y)} \right] \leq C\exp\left(-c(p)(\log(n))^{\frac{5}{4}}\right) + \sqrt{P_{y,\mu}\left[\mathcal{A}_n^c \right]},
\end{equation}
where $\mathcal{A}_n$ is the event that the random walk $(X_k)_{k = 0}^n$ takes at least $\lfloor \varepsilon \log(n)\rfloor^3$ consecutive steps in an open tube of length $\lfloor \varepsilon \log(n) \rfloor$, and noting that the walk starting from $y \in \mathcal{C}_\infty \cap B(0,r_n)$ stays a box $B(y,n)$ until time $n$. As for the second summand, we note that for any $\eta \in (0,\frac{1}{2})$ there exists $\varepsilon > 0$ small enough such that
\begin{equation}
P_{y,\mu}[\mathcal{A}_n^c] \leq \left(1 - \frac{c(\eta,\mu)}{n^\eta} \right)^{\lfloor \frac{1}{2}\sqrt{n} \rfloor + 1}, \qquad \text{ for }n \geq N_1(\mu,\eta), 
\end{equation}
as in~\cite[(4.35)]{nitzschner2025absence}, where we use Lemma~\ref{lem:ManyTubes} which holds uniformly over $B(0,r_n)\cap \mathcal{C}_\infty$. Since the term in the previous display converges to $0$ at a faster rate than the first summand in~\eqref{eq:Two-summands}, the claim follows.
\end{proof}

We are now in the position to prove the main result of this section, Theorem~\ref{thm:Decay-rate}. Our proof is motivated by an argument in~\cite{comets2005majorizing} which was recently used in~\cite{junk2024strong, junk2025coincidence}. There, the authors use that a relatively weak upper bound on the expectation of a fractional moment of the normalized partition function yields very strong disorder, exploiting translation invariance in an essential way. 

\begin{proof}[Proof of Theorem~\ref{thm:Decay-rate}]
By Jensen's inequality applied for the concave function $f(x) = \sqrt{x}$, $x \geq 0$, we have for any $n \in \mathbb{N}$,
\begin{equation}
\label{eq:Frac-moment}
\mathbb{E}\left[\log W_{n,\mu}^\beta \right] \leq  2  \log \mathbb{E}\left[\sqrt{ W_{n,\mu}^\beta} \right].
\end{equation}
Fix integers $m, n \in \mathbb{N}$ and consider for $x_1,...,x_m \in \mathcal{C}_\infty$ and $y \in \mathcal{C}_\infty$ the random variable
\begin{equation}
\widehat{W}_{mn,\mu}^\beta(y;x_1,...,x_m) = E_{y,\mu}\left[ \exp\left\{\beta \sum_{i = 1}^n \omega(i,X_i) - n\lambda(\beta) \right\} \prod_{i = 1}^m \mathbbm{1}_{\{X_{ni} = x_i \}} \right],
\end{equation}
which is the contribution to $W_{mn,\mu}^\beta(y)$ (defined in~\eqref{eq:Normalized-Partition-Function-other-starting-point}) from underlying paths $(X_k)_{k = 0}^{mn}$ starting from $y \in \mathcal{C}_\infty$ that are at position $x_i$ at times $ni$, $i \in \{1,...,m\}$. We will eventually let $m$ depend on $n$ and tend to infinity at a certain rate. We have
\begin{equation}
W_{mn,\mu}^\beta = \sum_{x_1,...,x_m \in \mathcal{C}_\infty \cap B(0,mn)} \widehat{W}_{mn,\mu}^\beta(0;x_1,...,x_m).
\end{equation}
Using that
\begin{equation}
\label{eq:FracMomentMethod}
\begin{minipage}{0.8\linewidth}
for any finite sequence of positive real numbers $(a_\iota)_{1 \leq \iota \leq N}$ with $N \in \mathbb{N}$, one has $\sqrt{\sum_{\iota = 1}^N a_\iota} \leq \sum_{\iota = 1}^N \sqrt{a_\iota}$,
\end{minipage}
\end{equation}
we find that 
\begin{equation}
\mathbb{E}\left[\sqrt{W_{mn,\mu}^\beta} \right] \leq \sum_{x_1,...,x_m \in \mathcal{C}_\infty \cap B(0,mn)}  \mathbb{E}\left[ \sqrt{\widehat{W}_{mn,\mu}^\beta(0;x_1,...,x_m)} \right].
\end{equation}
By using the simple Markov property for the random walk at time $(m-1)n$, and noting that contributions to the sum above are zero if $|x_{m-1}|_\infty > (m-1)n$, we obtain 
\begin{equation}
\label{eq:Iteration-step-1}
\begin{split}
\mathbb{E}\left[\sqrt{W_{mn,\mu}^\beta} \right] & \leq \sum_{x_1,...,x_{m-1} \in \mathcal{C}_\infty \cap B(0,(m-1)n)}  \mathbb{E}\left[ \sqrt{\widehat{W}_{mn,\mu}^\beta(0;x_1,...,x_{m-1})} \right] \\
& \qquad \times \sum_{x_m \in B(x_{m-1},n)}  \sup_{z \in \mathcal{C}_\infty \cap B(0,(m-1)n)} \mathbb{E}\left[\sqrt{\widehat{W}_{n,\mu}^\beta(z;x_m)}  \right] \\
& \leq \sum_{x_1,...,x_{m-1} \in \mathcal{C}_\infty \cap B(0,(m-1)n)}  \mathbb{E}\left[ \sqrt{\widehat{W}_{mn,\mu}^\beta(0;x_1,...,x_{m-1})} \right] \\
& \qquad \times (2n+1)^d \sup_{z \in \mathcal{C}_\infty \cap B(0,(m-1)n)} \mathbb{E}\left[\sqrt{W_{n,\mu}^\beta(z)}  \right],
\end{split}
\end{equation}
where we used that for any $z \in \mathcal{C}_\infty$,
\begin{equation}
\label{eq:Point-to-point-bound}
\begin{split}
\widehat{W}_{n,\mu}^\beta(z;x_m) & =  E_{z,\mu}\left[ \exp\left\{\beta \sum_{i = 1}^n \omega(i,X_i) - n\lambda(\beta) \right\} \mathbbm{1}_{\{X_{n} = x_m \}} \right]  \\
& \leq E_{z,\mu}\left[ \exp\left\{\beta \sum_{i = 1}^n \omega(i,X_i) - n\lambda(\beta) \right\} \right] = W_{n,\mu}^\beta(z).
\end{split}
\end{equation}
We introduce the shorthand notation
\begin{equation}
U^\beta_{m,n,\mu} = \sup_{z \in \mathcal{C}_\infty \cap B(0,mn)} \mathbb{E}\left[\sqrt{W_{n,\mu}^\beta(z)}  \right], \qquad \text{for $m,n\in \mathbb{N}$}.
\end{equation}
Returning to~\eqref{eq:Iteration-step-1}, we see that
\begin{equation}
\begin{split}
\mathbb{E}\left[\sqrt{W_{mn,\mu}^\beta} \right]  & \leq  \sum_{x_1,...,x_{m-1} \in \mathcal{C}_\infty \cap B(0,(m-1)n)}  \mathbb{E}\left[ \sqrt{\widehat{W}_{mn,\mu}^\beta(0;x_1,...,x_{m-1})} \right] (2n+1)^d U^\beta_{m,n,\mu} \\
& \stackrel{\text{(iterate)}}{\leq} \left((2n+1)^d U_{m,n,\mu}^\beta \right)^m.
\end{split}
\end{equation}
In particular, we see that
\begin{equation}
\frac{\log^\kappa(mn)}{mn} \log \mathbb{E}\left[\sqrt{W^\beta_{mn,\mu}} \right] \leq \frac{\log^\kappa(mn)}{n} \log\left((2n+1)^d U^{\beta}_{m,n,\mu}\right).
\end{equation}
We now choose for a given $n \in \mathbb{N}$ the largest number $m_n \in \mathbb{N}_0$ such that $m_nn \leq r_n$ (recall that $r_n$ is defined in~\eqref{eq:r_ndef}), i.e.
\begin{equation}
\label{eq:mn_def}
m_n = \left\lfloor\frac{r_n}{n}\right\rfloor, \qquad \text{ implying }m_n n \geq r_n - n.
\end{equation}
By~\eqref{eq:Unif-over-large-box} of Lemma~\ref{lem:Main-lemma-Sec5}, we see that $U_{m_n,n,\mu}^\beta \leq C \exp\left(-c(p) (\log(n))^{\frac{5}{4}} \right)$ for $n \geq  N'(\mu) \vee C(p)$ for $\widehat{\mathbb{Q}}_p$-a.e.~$\mu \in \{0 \in \mathcal{C}_\infty\}$, hence
\begin{equation}
\label{eq:Divergence-along-subseq}
\begin{split}
\frac{\log^\kappa(m_n n)}{m_n n} \log \mathbb{E}\left[\sqrt{W^\beta_{m_n n,\mu}} \right] & \leq \frac{\log^\kappa(m_n n)}{n}\left(\widetilde{C} - c'(p)(\log(n))^{\frac{5}{4}} \right) \\
& \stackrel{\eqref{eq:mn_def}}{\leq} \frac{\log^\kappa(r_n - n)}{n}\left(\widetilde{C} - c'(p)(\log(n))^{\frac{5}{4}} \right) \stackrel{\eqref{eq:r_ndef}}{\to} - \infty,
\end{split}
\end{equation}
where the inequality in the second line is valid for large enough $n \geq N'(\mu) \vee C(p)$, and we used that $\log^\kappa(r_n - n) \geq  \log^\kappa(\frac{1}{2}\exp(n^\xi)) \geq \frac{1}{2}n $, again for large enough $n$. \medskip

We have therefore established the divergence to $-\infty$ of $\frac{\log^\kappa(n)}{n} \log \mathbb{E}\left[\sqrt{W_{n,\mu}^\beta} \right]$ along the subsequence where $n$ is replaced by $\widetilde{r}_n = m_nn$ and we now turn to the result on the full sequence. To that end, we first notice that for any $k,\ell \in \mathbb{N}$ and $x_0 \in \mathcal{C}_\infty$, one has
\begin{equation}
W_{k+\ell,\mu}^\beta(x_0) = \sum_{x \in \mathcal{C}_\infty} \widehat{W}_{k,\mu}^\beta(x_0;x) W_{\ell,\mu}^\beta(x) \circ \eta_k,
\end{equation}
using the simple Markov property of the random walk,  
where we used the shift operator $\eta_k$ for $k \in \mathbb{N}_0$, acting on the environment $\omega \in \mathbb{R}^{\mathbb{N} \times \mathbb{Z}^d}$ by
\begin{equation}
(\eta_k \omega)(i,x) = \omega(i+k,x), \qquad i \in \mathbb{N}, x \in \mathbb{Z}^d.
\end{equation}
Since the $(\omega(i,x))_{i \in \mathbb{N},x\in \mathbb{Z}^d}$ are i.i.d.~we obtain (using again~\eqref{eq:FracMomentMethod}) the upper bound
\begin{equation}
\mathbb{E}\left[
\sqrt{W_{k+\ell,\mu}^\beta(x_0)}\right] \leq \sum_{x \in \mathcal{C}_\infty}  \mathbb{E}\left[ \sqrt{\widehat{W}_{k,\mu}^\beta(x_0;x)} \right] \mathbb{E}\left[ \sqrt{ W_{\ell,\mu}^\beta(x)}\right].
\end{equation}
By Jensen's inequality and the fact that $\widehat{W}_{k,\mu}^\beta(x_0;x)$ is zero if $|x-x_0|_\infty > k$, we find, arguing in the same way as in~\eqref{eq:Point-to-point-bound}, that
\begin{equation}
\label{eq:UpperBound-intermediate-k}
\mathbb{E}\left[\sqrt{W_{k+\ell,\mu}^\beta(x_0)}\right]  \leq  C k^d \mathbb{E}\left[ \sqrt{W_{k,\mu}^\beta(x_0)} \right].
\end{equation}
We now consider for any $N \in \mathbb{N}$ the decomposition $N = \widetilde{r}_n + \ell$, $\ell \in [0,...,\widetilde{r}_{n+1} - \widetilde{r}_n)$ with $n = n(N)$ (note that $n(N) \to \infty$ as $N \to \infty$). By~\eqref{eq:Divergence-along-subseq} we know that for any $K > 0$, there exists $n_0$ large enough such that 
\begin{equation}
\frac{\log^\kappa(\widetilde{r}_n)}{\widetilde{r}_n}\log \mathbb{E}\left[\sqrt{W_{\widetilde{r}_n,\mu}^\beta} \right] \leq -K, \qquad \text{ for }n \geq n_0,
\end{equation}
and by~\eqref{eq:UpperBound-intermediate-k} we see that for $N \geq N_0$, 
\begin{equation}
\begin{split}
\frac{\log^\kappa(N)}{N} \log \, & \mathbb{E}\left[\sqrt{W_{N,\mu}^\beta} \right]  \leq \frac{\log^\kappa(N)(C +d\log(N))}{N} - K\frac{\log^\kappa(N) \widetilde{r}_n }{N \log^\kappa(\widetilde{r}_n) } \\
& \leq \frac{\log^\kappa(N)(C +d\log(N))}{N} - K\frac{ \widetilde{r}_n }{\widetilde{r}_{n+1} } \\
& = \frac{\log^\kappa(N)(C +d\log(N))}{N} - K \frac{n}{n+1} \frac{\lfloor \lfloor\exp(n^\xi)\rfloor  /n  \rfloor}{\lfloor \lfloor \exp((n+1)^{\xi}) \rfloor /(n+1) \rfloor}.
\end{split}
\end{equation}
Now letting $N$ tend to infinity, we see that 
\begin{equation}
\limsup_{N \rightarrow \infty} \frac{\log^\kappa(N)}{N} \log \mathbb{E}\left[\sqrt{W_{N,\mu}^\beta} \right] \leq  -\frac{K}{e}.
\end{equation}
Since $K > 0$ was arbitrary, this shows the claim. Finally, we use again~\eqref{eq:Concentration-result} to see that for any $\varepsilon > 0$, for large $n$,
\begin{equation}
\mathbb{P}\left[ \log^\kappa(n) \Big\vert \frac{\log W_{n,\mu}^\beta}{n}  - \frac{\mathbb{E}[\log W_{n,\mu}^\beta]}{n} \Big\vert \geq \varepsilon \right] \leq 2 \exp\left( - C \varepsilon^2 \frac{n}{\log^{2\kappa}(n)} \right).
\end{equation}
It follows by the Borel-Cantelli lemma that $\mathbb{P}$-a.s.~for large enough $n$ (depending on the environment $\omega$ and on $\mu$), 
\begin{equation}
\log^\kappa(n) \frac{\log W_{n,\mu}^\beta}{n} \leq \log^\kappa(n) \frac{\mathbb{E}[\log W_{n,\mu}^\beta]}{n} + \varepsilon,
\end{equation}
and since the right-hand side diverges to $-\infty$, so does the left-hand side. 
\end{proof}

\begin{remark}
\label{rem:Final-Remarks}
\begin{enumerate}
\item[(1)] Our main result~\eqref{eq:Main-result-bound} shows that very strong disorder does not hold for the directed polymer on $\mathcal{C}_\infty$ for $\widehat{\mathbb{Q}}_p$-a.e.~$\mu \in \{0 \in \mathcal{C}_\infty\}$ whenever $\beta \in (0, \beta_{L^2}(\mathbb{Z}^d))$ for $d \geq 3$. However, by combining the results of~\cite{berger2010critical,birkner2011collision,
birkner,
birkner2010annealed,birkner2011disorder} (see also~\cite[Theorem B]{junk2024strong} and~\cite[Section 3.2]{zygouras2024directed}), one knows that $\beta_{L^2}(\mathbb{Z}^d) < \beta_c(\mathbb{Z}^d) ( = \overline{\beta}_c(\mathbb{Z}^d))$ for $d \geq 3$. One may naturally wonder how $\overline{\beta}_c^{\, \mathrm{cluster}}$ compares to $\overline{\beta}_c(\mathbb{Z}^d)$.
\item[(2)] Our bound on the decay rate obtained in~\eqref{eq:Bound-main-result} is rather rough, being defined in~\eqref{eq:Exponent} in terms of the implicit $c_4(p,\delta)$, which corresponds to the same quantity in~\cite[(4.9)]{nitzschner2025absence} (see also Remark 4.2 in the latter), and ultimately on the stochastic integrability obtained in~\cite[Proposition 11, Remark 16]{dario2021quantitative}, which is not optimal. To make progress on the precise rate would require a precise control of atypical sets attached to the cluster, which is out of the scope of the methods presented here. We also remark that sets of bad isoperimetry such as tubes in $\mathcal{C}_\infty$ appear in other contexts, including effective resistances or the maximum of the Gaussian free field on the cluster, see~\cite{abe2015effective,
schweiger2024maximum}.
\item[(3)] Since $\beta_c(\mathcal{C}_\infty) = 0$ for $\widehat{\mathbb{Q}}_p$-a.e.~$\mu \in \{0 \in \mathcal{C}_\infty\}$, one might also wonder in a different direction whether a phase transition can be recovered by ``zooming in'' around $\beta_c = 0$ by rescaling the inverse temperature $\beta = \beta_n \to 0$ (as $n \to \infty$). For the corresponding model on $\mathbb{Z}^d$, this strategy leads to an \textit{intermediate disorder regime}, which was studied in detail in~\cite{AKQ-14} in $d = 1$ and in~\cite{CSZ-AAP17} in $d = 2$. In particular, in $d = 2$ a precise picture of a phase transition in $\widehat{\beta}$ occurring when $\beta \sim \frac{\widehat{\beta}}{\sqrt{\log n}}$ has emerged, see, e.g.,~\cite{CCR25,CSZ-CMP19,CSZ23,GQT-21,tsai2024stochastic} and references therein for results at or near criticality.  
\end{enumerate}
\end{remark}

\subsection*{Acknowledgements}
Most of the research for the present paper was carried out while FC was supported by the Luxembourg National Research Fund (AFR/22/17170047/Bilateral-GRAALS). FC also acknowledges support from the European Union's Horizon 2020 research and innovation programme under the Marie Skłodowska-Curie grant agreement No 101034255. FC acknowledges the support of INdAM/GNAMPA. MN was partially supported by Hong Kong RGC grants ECS 26301824 and GRF 16303825.  The authors are grateful to Hubert Lacoin for suggesting an approach to obtain very strong disorder that inspired the proof presented in Section~\ref{s.ProofMainProp} and to Alberto Chiarini for useful discussions that lead to a simplification of the proof in Section~\ref{s.Deterministic}.

\bibliographystyle{abbrv}
\bibliography{PolymersRef}

\end{document}